\newtheorem{theorem}{Theorem}
\newtheorem{proposition}{Proposition}
\newtheorem{definition}{Definition}
\numberwithin{equation}{section}
\begin{document}
\title[Stability of a Delayed Predator-Prey Model for Puma Concolor]{Stability of a Delayed Predator-Prey Model for Puma Concolor}

\author[Mej\'ias]{Wilson Mej\'ias}

\author[Sep\'ulveda]{Daniel Sep\'ulveda}
\email{w.a.mejias@gmail.com, daniel.sepulveda@utem.cl}
\address{Oficina de Estudios y Políticas Agrarias – ODEPA, Teatinos 40, Santiago, Chile.}

\address{Departamento de Matem\'atica, Facultad de Ciencias Naturales, Matem\'atica y Medio Ambiente, Universidad Tecnol\'ogica Metropolitana, Las Palmeras 3360, \~Nu\~noa, Santiago, Chile.}
\subjclass{}
\thanks{}
\keywords{Puma Concolor, Guase-Type Predator-Prey System, Delay differential systems, Global stability, Absolute stability, Conditional stability}
\subjclass{34K20, 34K60, 92D25, 92D40}
\maketitle
\begin{abstract}
This study presents a mathematical model that describes the relationship between the Puma concolor and its prey using delay differential equations, a Holling type III functional response, logistic growth for the prey, and a Ricker-type function to model intraspecific competition of the pumas. For positive equilibrium, conditions guaranteeing absolute stability are established, based on the delay value and model parameters. The analysis demonstrates the existence of a unique maximal solution for the proposed model, which remains non-negative for nonnegative initial conditions and is well-defined for all $t$ greater than zero. Furthermore, numerical simulations with different parameter values were performed to investigate the effects of systematically removing a percentage of predators or prey. Numerical simulations attempt to exemplify and put into practice the theorems proved in this article.
\end{abstract}

\section{Introduction}
Population dynamics emerges as a scientific discipline dedicated to the comprehensive study of populations, focusing in particular on the mathematical modeling of their behaviors to accurately predict changes, actions, and profound biological consequences \citep{mat_apli}. Moreover, depredation typically involves the scenario of one organism consuming another, either in part or in whole. This dynamic interaction is captured through the prey-predator system, a model that delineates the intricate relationship between two species where one species serves as a feed for the other \citep{smith2007ecologia}. Differential equations are the cornerstone method for projecting population fluctuations over time. Starting from an initial population condition, these equations enable the precise prediction of future population sizes, showcasing their utility in modeling complex interspecific interactions \citep{kitzes2022handbook}.

The native species \textit{Puma concolor} (Puma) is the largest predator in the Chilean mountain ecosystems \citep{toledo2003puma}. Globally, this species is classified as ``Near Threatened", while in Chile it is designated as ``Endangered" \citep{inv_sp}. Among the factors that have a significant impact on this species, notable ones include hunting, urban expansion, change in land use and climate change \citep{rios2009puma}. Furthermore, \textit{ P. concolor} generates a strong conflict with rural communities, who perceive it as one of the main causes of livestock mortality, leading to illegal hunting \citep{bonacic2013ecologia}. Previous studies have estimated puma densities in different areas of Chile, ranging from 0.75 to 2.5 individuals per 100 km² \citep{guarda2017puma}, although there is evidence suggesting that the densities in Chilean Patagonia could reach 3.44 to 6 individuals per 100 km² \citep{bonacic2013ecologia}. This carnivore commonly preys on moose, deer, venison, beavers, squirrels, marmots, mice, rabbits, hares, and wild boars. However, in Chile, it particularly preys on lagomorphs (rabbits) and camelids (Vicuñas and Guanacos) \citep{toledo2003puma}. On the other hand, there are also indications that pumas occasionally consume other carnivores such as smaller felines and foxes \citep{pacheco2004dieta}. Regarding the ecology and social behavior of the species, \citet{rumiz2010roles} indicates that, being a large carnivore, puma populations play a key role in ecosystems by controlling the number of herbivores, thus enabling the regeneration of plant species. Essentially, they serve as population controllers. Furthermore, the distribution of pumas in their natural environment is influenced more by the presence, quantity, and susceptibility of their prey than by their social interactions \citep{rumiz2010roles}. In fact, the research referenced suggests that pumas may relocate to different areas when there is a reduction in prey availability.

Delay differential equations (DDEs) correspond to functional differential equations that have the particularity that their derivative is given in terms of function values at previous moments, making them suitable for modeling and studying biological phenomena \citep{smith2011introduction}. In the literature, it is possible to find applications of delay differential equations in modeling predator-prey interactions \citep{martin2001predator, fan2007permanence, kar2009stability, krisnawan2019model, moussaoui2015impact, garay2020modelo}.

Lately, scientific publications and studies related to the topic of predator-prey modeling have focused on subjects such as the analysis of temporal patterns based on different functional responses \citep{majumdar2022complex, naik2022multiple, jana2022holling, barman2022role} and the analysis of the ``Allee Effect" and the ``Fear Effect" in prey species \citep{li2022population, naik2022complex, devi2022role, lan2022hopf, gokcce2022dynamic}. However, there are no models that integrate delay differential equations, functional responses, and Ricker-type function to model intraspecific competition for the species \textit{P. concolor}. Given that this species is facing conservation issues, it is important to have tools that can simulate different scenarios and quantify the level of impact and response it has to various disturbances. In this context, the need arises to create a model, focusing on its functions according to the predatory characteristics of the Puma.



Section 2 delineates the formulation of the mathematical model characterizing the predator-prey dynamics specific to the species \textit{P. concolor}, encompassing the pertinent factors and their significance. Sections 3.1, 3.2, and 3.3 are dedicated to the rigorous mathematical analysis of the system. Theorem 1 asserts the existence of a unique non-negative solution to the model; Theorem 2 provides sufficient conditions for the extinction of the predator, while Theorems 3 and 4 scrutinize the local stability of the equilibria. Section 3.4 is concerned with the numerical stability analysis of the model's equilibria utilizing a \emph{stability evaluation function} implemented in the R language. Sections 3.5 and 3.6 are allocated to numerical simulations across various parameter values and the assessment of two potential scenarios involving the reduction of prey or predator populations. Section 4 synthesizes the conclusions derived from the investigation.

\section{Mathematical model}
A population dynamics model was developed using a system of delay differential equations, where the predator was associated with a Holling III type functional response, delay in the growth, and intraspecific competition from juvenile to adult stage. This type of model, by incorporating a Ricker-type recruitment function, adds the effect of intraspecific competition between predators to the model and intrinsically captures the carrying capacity of the ecosystem, \citep{gurney1980nicholson}. The growth of the prey population was modeled using a logistic growth equation.


\subsection{Model Formulation}
\noindent

As a starting point, we consider a Gause-type predator-prey system with a time delay $\tau$ in the interaction term of the predator equation, namely:
\begin{equation}\label{Eq: 1}
    \left\{\begin{array}{lcc}
             \dot{x}(t)=x(t)g(x(t))-y(t)p(x(t)),\\
             \\ \dot{y}(t)=-\beta y(t)+ y(t-\tau)p(x(t-\tau)), 
            \end{array}\right.
\end{equation}
where $\tau$ is a discrete delay, $x(t)$ and $y(t)$ denote the population of prey and adult predators at time $t$, respectively. Additionally, we assume that the prey population's dynamics, in the absence of predators, follow the logistic equation. Thus, in system (\ref{Eq: 1}), we consider $g(x)=r(1-x/K)$, where $r$ denotes the maximum growth rate of the prey, and $K$ denotes the carrying capacity of the habitat. 
 
Since the work focuses on the predator species \textit{Puma concolor} and it has been reported that the associated functional response for this species is of Holling type III (see \cite{soriadinamica}), in system (\ref{Eq: 1}), we consider $p(x)=\gamma x^2/(a^2+x^2)$, where $a$ is the half saturation constant and $\gamma$ is the maximum per capita consumption rate of the predator.

Finally, to model intraspecific competition between predators from birth to adulthood, we multiply the interaction term by $\exp(-y/N)$, thus incorporating a Ricker-type recruitment function, where $N$ is the optimal density for predator reproduction, and this expression is evaluated at $t-\tau$. Therefore, we formulate the following Gause-type predator-prey model with delay:
\begin{equation}\label{eq0}
    \left\{\begin{array}{lcc}
             \dot{x}(t)=r\bigg(1-\dfrac{x(t)}{K}\bigg)x(t)-\dfrac{\gamma x(t)^2}{a^2+x(t)^2}y(t),\\
             \\ 
             \dot{y}(t)=-\beta y(t)+ \dfrac{\gamma x(t-\tau)^2}{a^2+x(t-\tau)^2} y(t-\tau)e^{\frac{-y(t-\tau)}{N}}.\\
            \end{array}\right. t \geq 0
\end{equation}


It should be noted that the Ricker-type function incorporates into the model the effect of intraspecific competition among predators and, intrinsically, the carrying capacity of the ecosystem. To facilitate the understanding of the model, we present in Table \ref{tab1} the parameters present in system (\ref{eq0}) and their description.

\begin{table*}[h!]
\small
\centering
\caption{\centering{Simulation parameters in the model (\ref{eq0}) and their ecological definitions}}
    \begin{tabular}{ll}
        \toprule
         Parameter & Ecological Definition\\
        \midrule
        $r$ & Prey reproduction rate \\
        $K$	& Carrying capacity \\
        $a$ & Predator half-saturation constant \\
        $\gamma$ &  Maximum per capita predator consumption rate \\
        $\beta$ &	Predator mortality rate \\
        $N$ & Optimal reproductive size for the predator population \\
        $\tau$ & Period that includes gestation and the time it takes to adulthood for puma. \\
      \bottomrule
    \end{tabular}
    \label{tab1}
\end{table*}

\subsection{A priori model discussion}
\noindent
System (\ref{eq0}) is a model that fits the ecological characteristics of \textit{P. concolor}, considering its predatory habits and intraspecific competition during its developmental stage. However, Berryman \citep{berryman1995credible} defined attributes that allow determining the credibility of a prey-predator model. We refer to:
\begin{enumerate}
    \item The per capita growth rate of prey decreases with predator density, i.e., there is a negative effect of predators on the prey.
    \item The per capita growth rate of predators should increase with prey density, i.e., there is a positive effect of prey on predators.
    \item Predators should have finite appetites and, consequently, maximum reproduction rates.
    \item When food or other resources are scarce relative to population density, the per capita growth rate of predators should decrease with increasing predator density.
\end{enumerate}

Note that the system (\ref{eq0}) satisfies all the above attributes, in fact (1) and (2) directly resemble each other. On the other hand, from the second equation of the system (\ref{eq0}) it follows that predators have a limited reproduction rate, which reaches its maximum at an optimal population size. In addition, for values above this threshold, the reproduction rate decreases. Therefore, (3) and (4) are also verified.

Our model exhibits structural similarities with the framework proposed by \citet{wan2021stability}, which explores the stability of a Gause-type predator-prey fishery model that integrates proportional prey harvesting and a time delay in the Holling type II predator reaction function. In addition, \citet{ruan2009nonlinear} proposed a methodology to study local stability and Hopf bifurcations for predator-prey systems with time delay relying on the second-order transcendental equation associated with the linearization of the system. We emphasize that Ruan's proposal is based on the work of \citet{cooke1982discrete} where a complete analysis of local stability for second-order differential equations with delay, including stability switches, was developed.

Notice that the system (\ref{eq0}) models the species \textit{P. concolor} primarily through the Holling type III function and the appropriate selection of parameters. However, we believe that the model could be further customized by considering variables such as species size.

\section{Results}

This section presents the main findings of this study, where we propose an ecological mathematical model that describes the relationship between the Puma and its prey using Nicholson's blowflies-type and logistic-type models, considering various scenarios of population parameters. 
The existence and uniqueness of solutions of the proposed model were initially examined, followed by the determination of their equilibria and the subsequent analysis of their stability. Additionally, a computational routine was developed in the R programming language to facilitate a numerical stability analysis. The study culminated in an investigation utilizing various numerical simulations and the assessment of two distinct scenarios involving the reduction of either prey or predator populations.

\subsection{Existence and uniqueness of non-negative solutions}

We introduce some definitions and notation for delay differential equations. For $\tau \geq 0$, we consider $\mathcal{C}= C([-\tau,0],\mathbb{R}^2 )$ the Banach space with the norm $||\varphi||_{\tau}=\sup_{-\tau\leq \theta\leq 0}||\varphi(\theta)||$, where $||\cdot||$ is the maximum norm in $\mathbb{R}^2$. Any vector $\mathbf{v}\in \mathbb{R}^2$ is identified in $\mathcal{C}$ with the constant function $v(\theta)=\mathbf{v}$ for $\theta \in [-\tau,0]$. A general system of functional differential equations take the form
\begin{equation}\label{FDE}
\dot{x}(t)=f(t,x_t),    
\end{equation}
where $f:\mathbb{R}\times\mathcal{C} \supset D\mapsto \mathbb{R}^2$  and $x_t$ corresponds to the translation of a function $x(t)$ on the interval $[t-\tau,t]$ to the interval $[-\tau,0],$ more precisely $x_t\in \mathcal{C}$ is given by $x_t(\theta)=x(t+\theta),\; \theta \in [-\tau,0]$.  

A function $x$ is said to be a \textit{solution} of system (\ref{FDE}) on $[-\tau,A)$ if there is $A>0$ such that $x\in C([-\tau,A),\mathbb{R}^2), (t,x_t)\in D$ and $x(t)$ satisfies (\ref{FDE}) for $t\in[0,A)$. For given $\phi\in \mathcal{C}$, we say $x(t;0,\phi)$ is a solution of system (\ref{FDE}) \textit{with initial value} $\phi$ at $0$ if there is an $A>0$ such that $x(t;0,\phi)$ is a solution of equation (\ref{FDE}) on $[0,A)$ and $x_{0}(t;0,\phi)=\phi$.

Since non-negative solutions are significant for population models, the following subsets of $\mathcal{C}$ are often introduced : 
\begin{displaymath}
\mathcal{C}^+:=C([-\tau,0],\mathbb{R}_+^2), \quad \mathcal{C}_0:=\{\phi \in \mathcal{C}^+\, : \, \phi_i(0)>0,\, 1\leq i\leq 2\},
\end{displaymath}
where $\mathbb{R}_+^2=\{\mathbf{x}\in \mathbb{R}^2\,:\, x_i\geq 0,\,  1\leq i\leq 2\}$.
\begin{theorem} \label{Theo1}
System (\ref{eq0}) has a unique non-negative solution defined over $[-\tau,+\infty)$ for each initial condition $\phi=(\phi_1,\phi_2)^T\in \mathcal{C}^+$.
\end{theorem}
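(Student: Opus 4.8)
The plan is to proceed in the three classical stages for delay differential systems: local existence and uniqueness, invariance of the non-negative cone $\mathcal{C}^+$, and a priori bounds that upgrade the local solution to a global one. Writing (\ref{eq0}) in the abstract form (\ref{FDE}) with the obvious vector field $f$, I would first observe that $f$ is continuously differentiable on $\mathcal{C}^+$: the nonlinearities $r(1-x/K)x$, $\gamma x^2/(a^2+x^2)$ and $y\,e^{-y/N}$ are all smooth, and $a^2+x^2 \geq a^2 > 0$ rules out any singularity, so $f$ is locally Lipschitz on bounded subsets of $\mathcal{C}$. The fundamental existence–uniqueness theorem for functional differential equations then yields, for each $\phi \in \mathcal{C}^+$, a unique solution $x(t;0,\phi)$ on a maximal interval $[-\tau,A)$ with $A>0$.

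Next I would establish non-negativity by the method of steps combined with variation-of-constants representations. For the prey, as long as $x$ remains positive one may write
\begin{equation*}
x(t) = \phi_1(0)\exp\left(\int_0^t \left[r\left(1-\frac{x(s)}{K}\right) - \frac{\gamma x(s)}{a^2+x(s)^2}\,y(s)\right]ds\right),
\end{equation*}
so $x(t)>0$ whenever $\phi_1(0)>0$; and since $\dot{x}=0$ on $\{x=0\}$, uniqueness forces $x\equiv 0$ when $\phi_1(0)=0$. In either case $x(t)\geq 0$. For the predator I would use
\begin{equation*}
y(t) = \phi_2(0)\,e^{-\beta t} + \int_0^t e^{-\beta(t-s)}\,\frac{\gamma x(s-\tau)^2}{a^2+x(s-\tau)^2}\,y(s-\tau)\,e^{-y(s-\tau)/N}\,ds,
\end{equation*}
and argue inductively on the intervals $[(k-1)\tau,k\tau]$: on each such interval the delayed arguments are already known to be non-negative (on $[-\tau,0]$ from $\phi\in\mathcal{C}^+$, and on later intervals from the previous step), so the integrand is non-negative and hence $y(t)\geq 0$.

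Finally, to obtain global existence I would derive a priori bounds ruling out finite-time blow-up, since by the continuation theorem a solution on a finite maximal interval must leave every compact set, i.e. become unbounded as $t\to A^-$. For the prey, discarding the non-negative predation term gives $\dot{x}(t)\leq r\left(1-x(t)/K\right)x(t)$, and comparison with the logistic equation bounds $x$ on every finite interval (indeed $\limsup_{t\to\infty}x(t)\leq K$). The crucial step is the bound on $y$: using $\gamma x^2/(a^2+x^2)\leq \gamma$ together with the elementary inequality $s\,e^{-s/N}\leq N/e$ for $s\geq 0$, the delayed forcing term is bounded above by $\gamma N/e$, so $\dot{y}(t)\leq -\beta y(t)+\gamma N/e$, and Gr\"onwall/comparison yields $y(t)\leq \max\{\phi_2(0),\,\gamma N/(\beta e)\}$.

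The hard part will be precisely this uniform control of $y$, because the predator equation is driven by a delayed term that could a priori amplify along the method-of-steps iteration; what rescues the argument is the boundedness of the Ricker recruitment function $s\mapsto s\,e^{-s/N}$, which caps the forcing independently of the (possibly large) delayed predator density. With both components bounded on every finite interval, the maximal interval cannot be finite, so $A=+\infty$ and the solution is defined and non-negative on $[-\tau,+\infty)$, completing the proof.
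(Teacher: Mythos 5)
Your proof is correct, and it shares the paper's overall skeleton (local existence and uniqueness from smoothness of the vector field, invariance of the non-negative cone, a priori bounds combined with the continuation theorem), but both of your key technical steps take a genuinely different route. For non-negativity, the paper simply verifies the quasi-positivity conditions --- $f_1(\mathbf{x},\mathbf{y})=0$ when $x_1=0$ and $f_2(\mathbf{x},\mathbf{y})\geq 0$ when $x_2=0$ --- and invokes the corresponding invariance theorem from \cite{smith2011introduction}, whereas you prove it by hand, via the exponential representation of the prey component and a variation-of-constants formula plus induction over the intervals $[(k-1)\tau,k\tau]$ for the predator; the two arguments are equivalent in strength, yours more self-contained, the paper's shorter. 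The more substantial difference is in the a priori bounds. The paper only uses $e^{-x_{2_t}/N}\leq 1$, which leaves the delayed predator density in the estimate, $\dot{x}_2\leq \gamma x_2(t-\tau)$, and therefore requires the method of steps to obtain the factorial bound $x_2(t)\leq ||\phi_2||\,\gamma^n (t-n\tau)^n/n!$ --- a bound that grows with $t$ but is finite on finite intervals, which is enough to rule out finite-time blow-up. You instead exploit the boundedness of the Ricker recruitment, $s\,e^{-s/N}\leq N/e$ for $s\geq 0$, so the delayed forcing is capped independently of the (possibly large) delayed state, and a plain comparison argument yields the uniform bound $y(t)\leq \max\{\phi_2(0),\gamma N/(\beta e)\}$; likewise your logistic comparison gives $x(t)\leq \max\{\phi_1(0),K\}$ in place of the paper's $\phi_1(0)e^{rt}$. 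Your route is not only simpler (no iteration) but strictly stronger: it shows every orbit is bounded uniformly in time, i.e.\ the system is dissipative. This is precisely the property that the paper later invokes in the proof of Theorem \ref{Theo3} (``Theorem \ref{Theo1} guarantees that every orbit of (\ref{eq0}) is bounded''), even though the paper's own proof of Theorem \ref{Theo1} only delivers bounds on finite intervals; so your argument supplies exactly what that later application needs.
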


\begin{proof}
We will denote by $f(t,x(t),x(t-\tau))$ the right hand side of system (\ref{eq0}) and $x(t)=  (x_1(t), x_2(t))^T$, then (\ref{eq0}) can be written as,
\begin{equation}\label{Semi-Linear}
\dot{x}(t)=f(x(t),x(t-\tau)),  
\end{equation}
where  $f: \mathbb{R}^2_+\times \mathbb{R}^2_+   \to\mathbb{R}^2$ is given by
\begin{equation}
\label{eq2}
    f(\mathbf{x}, \mathbf{y})=
    \begin{pmatrix}
    f_1(\mathbf{x}, \mathbf{y})\\f_2(\mathbf{x}, \mathbf{y})
    \end{pmatrix} 
    =
    \begin{pmatrix}
        r\left(1-\dfrac{x_{1}}{K}\right)x_{1}-\dfrac{\gamma x_{1}^2}{a^2+x_{1}^2}x_{2}\\ 
        -\beta x_{2}+ \dfrac{\gamma y_{1}^2}{a^2+y_{1}^2} y_{2}e^{\frac{-y_{2}}{N}}
    \end{pmatrix},
\end{equation}
with $\mathbf{x}=(x_1,x_2)^{T}$ and $\mathbf{y}=(y_1,y_2)^{T}$.
We denote $f_{\mathbf{x}}$ to the derivative of $f$ respect to the variable $\mathbf{x}$, consequently the map $f_{\mathbf{x}}: \mathbb{R}^2_+\times \mathbb{R}^2_+\to M(\mathbb{R})_{2\times 2}$  defined by 
$$f_{\mathbf{x}}(\mathbf{x}, \mathbf{y})=\begin{pmatrix} f_1/\partial x_1& f_1/\partial x_2 \\ 
                       f_2/\partial x_1& f_2/\partial x_2
\end{pmatrix}=\begin{pmatrix}
        \frac{r}{K}(K-2x_{1})-\frac{2\gamma a^2 x_{1}x_{2}}{(a^2 + x_{1}^2)^2} & \frac{\gamma x_{1}^2}{a^2 + x_{1}^2}\\ 
        0 & -\beta
    \end{pmatrix}$$
is continuous over $\mathbb{R}^2_+\times \mathbb{R}^2_+$. Now, applying Theorems 3.1 and 3.2 in \cite{smith2011introduction}, it follows that the system (\ref{eq0}) has a unique solution defined over a maximal interval for each initial condition $\phi\in \mathcal{C}^+$. In order to show that $x(t;0,\phi)$ takes non-negative values, we note that if $\mathbf{x}, \mathbf{y}\in \mathbb{R}^2_{+}$ and $x_1=0$, then 
\begin{displaymath}
    f_{1} (\mathbf{x}, \mathbf{y}) =  r\left( 1-\frac{x_{1}}{K}\right) x_{1}-\dfrac{\gamma x_{1}^2}{a^2+x_{1}^2}x_2 = 0.
\end{displaymath}
On the other hand, for $\mathbf{x}, \mathbf{y}\in \mathbb{R}^2_{+}$ and $x_2=0$,  it follows that
\begin{displaymath}
    f_{2} (\mathbf{x}, \mathbf{y}) =  -\beta x_{2}+ \dfrac{\gamma y_{1}^2}{a^2+y_{1}^2} y_{2}e^{\frac{-y_{2}}{N}} = \dfrac{\gamma y_{1}^2}{a^2+y_{1}^2} y_{2}e^{\frac{-y_{2}}{N}}\geq 0.
\end{displaymath}

Consequently, each non-negative initial condition $\phi$ has a corresponding solution $x(t;0,\phi)$ that takes non-negative values for $t$ in the maximal interval. 

We claim that the solutions of (\ref{eq0}), corresponding to non-negative initial conditions, are defined for all $t \geq 0$. Otherwise, they would be defined over an interval $[-\tau, A)$, where $0 < A < \infty$. Since $x(t)$ is a solution of (\ref{eq0}), it follows that  
$x_1(t)$ satisfies:
\begin{displaymath}
    \dot{x}_1=r\bigg(1-\dfrac{x_1}{K}\bigg)x_1-\dfrac{\gamma x_1^2}{a^2+x_1^2}x_2.
\end{displaymath}
Given that $x_1$ and $x_2$ are non-negative, it follows that
\begin{displaymath}
    \dot{x}_1 \leq rx_1,
\end{displaymath}
or equivalently
\begin{displaymath}
    x_1(t)\leq \phi_1(0) e^{rt}.
\end{displaymath}
Therefore we have:
\begin{equation}
\label{lim1}
    \lim_{t \to A^-} x_1(t)\leq \lim_{t \to A^-} \phi_1(0) e^{rt} = \phi_1(0) e^{rA } < +\infty.
\end{equation}
In a similar way, the second equation of system (\ref{eq0}) corresponds to:
\begin{displaymath}
    \dot{x}_2=-\beta x_2+ \dfrac{\gamma x_{1_t}^2}{a^2+x_{1_t}^2}  x_{2_t} e^{\frac{-x_{2_t}}{N}},
\end{displaymath}
by using the fact that $x_1$ and $x_2$ take non-negative values, it follows that  $|e^{\frac{-x_{2_t}}{N}}|\leq 1$, and consequently we obtain: 
\begin{displaymath}
    \dot{x}_2 \leq \dfrac{\gamma x_{1_t}^2 x_{2_t}}{a^2+x_{1_t}^2}.  
\end{displaymath}
Next,  since the functional response is bounded from above we have:
\begin{displaymath}
    \dfrac{dx_2}{dt} \leq \gamma x_2(t-\tau).  
\end{displaymath} 
By using the method of steps we deduce that 
\begin{displaymath}
   x_2(t) \leq  ||\phi_2|| \frac{\gamma^n(t-n\tau)^n}{n!},
\end{displaymath}
for $t \in [n\tau, (n+1)\tau)$. Consequently, making $t\to A$ it follows that
\begin{equation}
\label{lim2}
    \lim_{t \to A^-} x_2(t)\leq \lim_{t \to A^-}  ||\phi_2|| \frac{\gamma^n(A-n\tau)^n}{n!} < +\infty.
\end{equation}

This estimate ensures that $A=+\infty$, because if $A < +\infty$, then $|x(t)| \to \infty$ as $t \to A $, contradicting the estimates (see, for instance, Theorem 3.2 \cite{smith2011introduction}).

\end{proof}

\subsection{Equilibrium points and Linearization of (\ref{eq0}) }
To determine the equilibrium points of system (\ref{eq0}), the differential equations that comprise it were set to zero, assuming that at equilibrium, the population values with and without delays are identical, that is, $x = x_{t}$ and $y = y_{t}$. This yields the following system of equations:
\begin{equation}
\begin{array}{rcl}
  0&=&\displaystyle  r\left( 1-\frac{x}{K}\right) x-\frac{\gamma x^2}{a^2+x^2}y, \\ \\
  0&=& \displaystyle -\beta y + \frac{\gamma x^2}{a^2+x^2} y e^{\frac{-y}{N}}. 
  \end{array}
  \label{eq_equilibrio}
\end{equation}
It is not difficult to determine that the points $E_1=(0,0)$ and $E_2=(K,0)$ are solutions of system (\ref{eq_equilibrio}), thus equilibria of system (\ref{eq0}). However, it is particularly important to determine the existence of equilibria with both positive coordinates. By searching for positive solutions of (\ref{eq_equilibrio}), the following system of equations is obtained:
\begin{equation}
\label{3.7}
    \begin{array}{rcl}
         y&=& \displaystyle  \frac{r}{\gamma}\frac{(1-x/K)(a^2+x^2)}{ x},  \\ \\
         y&=& \displaystyle N\ln\left(\frac{\gamma}{\beta}\frac{x^2}{a^2+x^2}\right) . 
    \end{array}
\end{equation}
Note that from the first expression in (\ref{3.7}), it follows that $y>0$ for $x$ in the interval $(0, K)$ and $y\leq 0$ for $x$ in the interval $[K,+\infty)$. Thus, we must look for solutions for values of $x\in (0,K)$. On the other hand, the second expression in (\ref{3.7}) will take positive values in the desired interval if and only if the following condition is satisfied:
\begin{equation}
\label{3.8}
\frac{\gamma}{\beta}\frac{K^2}{a^2+K^2}>1.
\end{equation}

In other words, condition (\ref{3.8}) is necessary and sufficient to guarantee the existence of at least one positive solution of system (\ref{eq_equilibrio}).

It is recalled that the linearization of system (\ref{eq0}) around a stationary solution $\mathbf{u}=(u_1,u_2)$ takes the form:
\begin{displaymath}
    x'(t)=Ax(t)+Bx(t-\tau)
\end{displaymath} and the associated characteristic equation corresponds to
\begin{displaymath}
 \det(\lambda I - A - e^{-\lambda \tau }B) = 0;
\end{displaymath}
where the matrices $A$ and $B$ are defined as:
\begin{displaymath}
    A=
\begin{pmatrix}
     \frac{\partial f_1}{\partial x} & \frac{\partial f_1}{\partial y} \\ \\ 
     \frac{\partial f_2}{\partial x} & \frac{\partial f_2}{\partial y} 
\end{pmatrix}=
    \begin{pmatrix}
        a_{11} & a_{12}  \\ 
         0 &  a_{22}
    \end{pmatrix}, 
\quad
\textnormal{and}
\quad
B=
\begin{pmatrix}
     \frac{\partial f_1}{\partial x_t} & \frac{\partial f_1}{\partial y_t} \\ \\
     \frac{\partial f_2}{\partial x_t} & \frac{\partial f_2}{\partial y_t} 
\end{pmatrix}=
    \begin{pmatrix}
     0  &0 \\ 
     b_{21} 
 & b_{22} 
    \end{pmatrix},    
\end{displaymath}
with
\begin{displaymath}
    \begin{array}{c}
\displaystyle a_{11}=  r\left(1-\frac{2u_1}{K}\right) - \frac{2\gamma u_1 u_2a^2}{(a^2+u_1^2)^2},\quad a_{12}= \frac{\gamma u_1^2}{a^2+u_1^2} ,\quad a_{22}=-\beta,\\ \\
\displaystyle
 b_{21}= \frac{2\gamma u_1 u_2 a^2}{(a^2+u_1^2)^2}e ^{\frac{-u_2}{N}},\quad \textnormal{and}\quad b_{22}=\frac{\gamma u_1^2 }{a^2+u_1^2} \left(1-\frac{u_2}{N}\right)e ^{\frac{-u_2}{N}}.
    \end{array}
\end{displaymath}
Thus, the characteristic equation is equivalent to:
\begin{equation}
\label{2.6}
\lambda^2 -(a_{11}+a_{22})\lambda +a_{11}a_{22} - e^{-\lambda \tau} \left(b_{22}\lambda -b_{22}a_{11} +a_{12}b_{21}\right) =0.
\end{equation}

\begin{definition} \label{Def1}
The steady state $\mathbf{u}$ of system (\ref{eq0}) is said to be \textit{absolutely stable} (i.e., asymptotically stable regardless of the delays) if it is asymptotically stable for all delays $\tau \geq 0$. $\mathbf{u}$ is called \textit{conditionally stable} (i.e., asymptotically stable depending on the delays) if it is asymptotically stable for $\tau$ within some interval, but not necessarily for all delays $\tau \geq 0$.
\end{definition}

\subsubsection{Transcendental polynomial equation of second degree}

It is emphasized that for the discrete delay predator-prey system (\ref{eq0}), the characteristic equation of the linearized system at a non-zero steady state corresponds to a transcendental second-degree polynomial equation, which has been studied by various researchers \citep{cooke1982discrete,brauer1987absolute,ruan2001absolute, ruan2009nonlinear}. Next, we state a result from \citet{cooke1982discrete} regarding the distribution of the roots of equation (\ref{2.6}), and for this purpose, we introduce some equations and notation, starting with the general transcendental second degree equation:
\begin{equation}
\label{ET2}
F(\lambda):=\lambda^2 +m\lambda +n\lambda e^{-\lambda \tau}+p+qe^{-\lambda \tau}=0.
\end{equation}
Note that for $\tau=0$ the above  equation becomes into
\begin{equation}
\label{E2}
\lambda^2 +(m+n)\lambda +p +q=0.
\end{equation}
Suppose that both roots of equation (\ref{E2}) have negative real part, the Routh-Hurwitz criteria (pp. 435, H. R. Thieme, 2003)  establish that if
$$m+n>0,\quad \textnormal{and}\quad p+q>0,$$
then the roots of (\ref{E2}) have negative real part. From equation (\ref{ET2}) we have:
\begin{equation}\label{ET3}
 e^{-\lambda \tau}=-\frac{\lambda^2 +m\lambda +p}{n\lambda +q}.
\end{equation}

Note that if equation (\ref{ET2}) have any purely imaginary solution, namely $\lambda= i\omega$, then it follows:
$$
 1=\left|\frac{\omega^2 -im\omega -p}{in\omega +q}\right|
$$
which is equivalent to
\begin{equation}\label{Ec4}
 \omega^4+( m^2-n^2-2p )\omega^2+p^2-q^2=0,
\end{equation}
and its roots $\omega_{\pm}^2$ are given by
\begin{equation}
\label{Roots}
\omega_{\pm}^2=\frac{1}{2}(2p+n^2-m^2) \pm \frac{1}{2}\left[ (2p+n^2-m^2)^2 -4(p^2-q^2)\right]^{1/2}.
\end{equation}
Consequently, we have:
\begin{enumerate}
\item[a)] If $2p+n^2-m^2<0$ and $p^2-q^2>0$ hold, then the equation (\ref{Ec4}) has no positive solutions. Which is equivalent to the characteristic equation (\ref{ET2}) has no purely imaginary roots.

\item[b)] If $p^2-q^2<0$ holds, then the equation (\ref{Ec4}) has one positive solution. Which is equivalent to the characteristic equation (\ref{ET2}) has one purely imaginary root, $\lambda=i\omega_+$, $\omega_+>0$.

\item[c)] Assume that 
$$p^2-q^2>0,\, n^2-m^2+2p>0,\quad \mbox{and}\quad (n^2-m^2+2p)^2>4(p^2-q^2),$$
are satisfied, then the characteristic equation (\ref{ET2}) has two purely imaginary roots, $\lambda_{\pm}=i\omega_{\pm}$, $\omega_+>\omega_->0$.
\end{enumerate}

In order to study the sign of the derivative of $\Re{\lambda}$ respect to  $\tau$ for
$\lambda$ purely imaginary, we differentiate equation (\ref{ET2})  to obtain
\begin{equation}
\label{DET2}
\underbrace{\left(2\lambda  +m +\left[n - \tau \left( n\lambda + q\right) \right)e^{-\lambda \tau}\right)}_{\frac{dF}{d\lambda}}\frac{d\lambda}{d\tau}=\lambda \left(n\lambda +q\right) e^{-\lambda \tau}.
\end{equation}
Note that the purely imaginary roots are simple. In other case, we have simultaneously $F(\lambda)=0$ and $dF(\lambda)/d\lambda=0$ for some $\lambda=i\omega$. From equation (\ref{DET2})  it follows that $dF(\lambda)/d\lambda=0$ implies $\lambda \left(n\lambda +q\right) e^{-\lambda \tau}=0$, or 
$$0=|\lambda \left(n\lambda +q\right) e^{-\lambda \tau} |=|n\lambda^2 +q\lambda |,$$
which is equivalent to $n=0=q$.  In this case $dF(\lambda)/d\lambda=0$ evaluate at $i\omega$ become in $2i\omega+m=0$,  which is satisfied only for $m=0=\omega$. Thus, a necessary condition for the existence of roots of the order of greater than one for the characteristic equation (\ref{ET2})  is that the coefficients $m,n,q$ be zero. This contradicts our assumption $m+n>0$. We conclude that all the purely imaginary roots of the characteristic equation (\ref{ET2}) are simple. Consequently, we can compute $d\lambda/d\tau$ using an implicit derivative.

Note that from equations (\ref{ET3}), (\ref{Ec4}) and (\ref{DET2}) we have
\begin{displaymath}
\begin{array}{c}
\displaystyle e^{\lambda \tau}=-\frac{n\lambda +q}{\lambda^2+m\lambda +p}, \quad m^2\omega^2+(\omega^2-p)^2=q^2+n^2\omega^2,\\ \\ \displaystyle \left( \frac{d\lambda}{d\tau}  \right)^{-1}=\frac{(2\lambda +m)e^{\lambda \tau}+n}{\lambda(n\lambda +q)}-\frac{\tau}{\lambda}.
\end{array}
\end{displaymath}
Next, by using the above indentities, we can determine the sign of $\frac{d(\Re \lambda)}{d\tau}$,
\begin{displaymath}
\begin{array}{rcl}
\mathrm{sign}\left\{\frac{d(\Re \lambda)}{d\tau} \right\}_{\lambda=i\omega} &=& \mathrm{sign}\left\{\Re \left(\frac{d\lambda}{d\tau} \right)^{-1}\right\}_{\lambda=i\omega}  \\ \\
&=& \mathrm{sign}\left\{\Re \left(\frac{(2\lambda +m)e^{\lambda \tau}+n}{\lambda(n\lambda +q)}-\frac{\tau}{\lambda}\right)\right\}_{\lambda=i\omega}  \\ \\
&=& \mathrm{sign}\left\{\Re \left(-\frac{(2\lambda +m)}{\lambda(n\lambda +q)}\frac{n\lambda +q}{\lambda^2+m\lambda +p}\right)+\Re \left(\frac{n}{\lambda(n\lambda +q)}\right)\right\}_{\lambda=i\omega}  \\ \\
&=& \mathrm{sign}\left\{\Re \left(-\frac{(2\lambda +m)}{\lambda(\lambda^2+m\lambda +p)}\right)+\Re \left(\frac{n}{\lambda(n\lambda +q)}\right)\right\}_{\lambda=i\omega}  \\ \\
&=& \mathrm{sign}\left\{\Re \left(-\frac{(2i\omega +m)}{i\omega(-\omega^2+m i\omega +p)}\right)+\Re \left(\frac{n}{i\omega(n i\omega+q)}\right)\right\}  \\ \\
&=& \mathrm{sign}\left\{\Re \left(-\frac{(2i\omega +m)}{1}\frac{(i\omega^3-m \omega^2 -p i\omega)}{m^2\omega^4 +\omega^2(\omega^2-p)^2}\right)+\Re \left(\frac{n}{1}\frac{(-n\omega^2- i q\omega)}{n^2\omega^4+  q^2\omega^2}\right)\right\}  \\ \\
&=& \mathrm{sign}\left\{\frac{m^2-2(p-\omega^2)}{m^2\omega^2+(\omega^2-p)^2}-\frac{n^2}{n^2\omega^2+q^2}\right\} \\ \\
&=& \mathrm{sign}\left\{\frac{m^2-2(p-\omega^2)}{n^2\omega^2+q^2}-\frac{n^2}{n^2\omega^2+q^2}\right\} =\mathrm{sign}\left\{m^2-n^2-2p+2\omega^2\right\} .
\end{array}
\end{displaymath}

Changing $\omega^2$ by $\omega_{\pm}^2$ in the previous equation, it follows that the sign is positive for $\omega^2_{+}$ and negative for $\omega^2_{-}$. For case b), there is only one positive purely imaginary, $\lambda=i\omega_+$. Consequently, only one cross of the imaginary axis from left to right is possible as $\tau$ increases (since the real part increases with respect to $\tau$), and the stability of the zero solution can only be a loss. For case c), cross from left to right of the imaginary axis as $\tau$ increases occurs for any $\tau$ corresponding to a value $\omega_+$; on the other side, cross from right to left occurs for any $\tau$ corresponding to $\omega_-$.

Let us suppose that equation (\ref{ET2}) has at least one purely imaginary solution, and from (\ref{ET3}) we have:

\begin{displaymath}
 e^{-i\omega \tau}=\displaystyle\frac{\omega^2 -i m\omega -p}{i n\omega +q}=\frac{q\omega^2-pq-nm\omega^2+(np\omega-qm\omega-n\omega^3)i}{q^2+n^2\omega^2}.   
\end{displaymath}
Since the real part of both number must be equal we obtain:
\begin{equation}\label{tau}
\cos(\omega \tau)=\frac{q\omega^2 -pq-nm\omega^2}{q^2 + n^2\omega^2},
\end{equation}
hence the values of $\tau$ for which there are imaginary roots are
\begin{equation}
\label{2.11}
\tau_{n}^{\pm}=\frac{1}{\omega_{\pm}}\arccos\left\{ \frac{q(\omega^2_{\pm}-p)-mn\omega^2_{\pm}}{q^2+n^2\omega^2_{\pm}}  \right\}+\frac{2\pi j}{\omega_{\pm}},\quad j=0,1,2,\cdots 
\end{equation}

The following hypotheses are introduced to facilitate the statement of the result of \cite{cooke1982discrete} concerning the distribution of the characteristic roots of equation (\ref{2.6}), and thus determine the local stability of an equilibrium of (\ref{eq0}).

\begin{enumerate}
    \item[(A1)] $m+n>0$,
    \item[(A2)] $q+p>0$,
    \item[(A3)] $n^2-m^2+2p<0$ and $p^2-q^2>0$,
    \item[(A4)] $p^2-q^2<0$,
    \item[(A5)] $p^2-q^2>0, n^2-m^2+2p>0,$ and $(n^2-m^2+2p)^2>4(p^2-q^2)$.
\end{enumerate}

\begin{proposition}\cite[pp. 605-607]{cooke1982discrete}\label{prop:C-G}
\label{Prop1}
Let $\tau_j^{\pm}$ ($j=0,1,2,\cdots $) be defined by (\ref{2.11}).

\begin{enumerate}
\item[i)] If (A1)-(A3) hold, then all the roots of equation (\ref{2.6}) have negative real parts for every $\tau \geq 0$.

\item[ii)] If (A1), (A2), and (A4) hold, then when $\tau \in [0,\tau^+_0)$, all the roots of equation (\ref{2.6}) have negative real parts. When $\tau=\tau^+_0$, equation (\ref{2.6}) has a pair of purely imaginary roots $\pm i\, \omega_+$. And when $\tau>\tau^+_0$, equation (\ref{2.6}) has at least one root with a positive real part.

\item[iii)] If (A1), (A2), and (A5) hold, then there exists a positive integer $k$ such that there are $k$ changes from stability to instability to stability. In other words, when
\[\tau\in [0,\tau_0^+), (\tau_0^- ,\tau_1^+),\cdots, (\tau_{k-1}^- ,\tau_k^+), \] all roots of equation (\ref{2.6}) have negative real parts, and when
\[\tau\in [\tau_0^+,\tau_0^-), [\tau_1^+,\tau_1^-),\cdots, [\tau_{k-1}^+,\tau_{k-1}^-), \textnormal{ y }\tau>\tau^+_k, \] 
the equation (\ref{2.6}) possesses at least one root with a positive real part.
\end{enumerate}
\end{proposition}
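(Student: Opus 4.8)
The plan is to treat the delay $\tau$ as a continuation parameter and to track the roots of the characteristic function $F(\lambda)$ in (\ref{ET2}) as $\tau$ increases from $0$. For each fixed $\tau$, $F$ is an entire function of $\lambda$ whose coefficients depend analytically on $\tau$; moreover (\ref{ET2}) is of retarded type (the leading $\lambda^2$ term carries no factor $e^{-\lambda\tau}$), so in any right half-plane $\Re\lambda\ge c$ there are only finitely many roots. By continuity of the roots of an analytic family (an application of Rouch\'e's theorem on small contours), the number $N(\tau)$ of roots with positive real part, counted with multiplicity, is well defined and is locally constant in $\tau$; since the purely imaginary roots have already been shown to be simple, $N(\tau)$ can change only when a root crosses the imaginary axis, i.e. only at the critical values $\tau_n^{\pm}$ of (\ref{2.11}).

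First I would fix the base point of the continuation. At $\tau=0$ the characteristic equation collapses to the quadratic (\ref{E2}), and under (A1)--(A2) the Routh--Hurwitz criterion gives $N(0)=0$. Next I would record the bookkeeping rule furnished by the transversality computation already carried out before the statement: at each $\tau_n^+$ (carrying the frequency $\omega_+$) one has $\mathrm{sign}\{d(\Re\lambda)/d\tau\}>0$, so a conjugate pair crosses from left to right and $N$ jumps by $+2$; at each $\tau_n^-$ (frequency $\omega_-$) the sign is negative, so a pair crosses from right to left and $N$ drops by $2$. With $N(0)=0$ and this $\pm2$ rule, the three assertions reduce to counting crossings.

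Cases i) and ii) are then immediate. For i) I would invoke part (a) of the dichotomy following (\ref{Roots}): under (A3) equation (\ref{Ec4}) has no positive root, hence $F$ has no purely imaginary root for any $\tau\ge0$, so $N(\tau)\equiv N(0)=0$ and the equilibrium is absolutely stable. For ii) I would use part (b): under (A4) there is a single frequency $\omega_+$ and the least critical delay is $\tau_0^+$; on $[0,\tau_0^+)$ no crossing has occurred so $N\equiv0$, at $\tau_0^+$ the pair $\pm i\omega_+$ lies on the axis, and for $\tau>\tau_0^+$ the positive transversality sign forces $N\ge2$. Because $\omega_+$ produces only rightward crossings, stability is lost at $\tau_0^+$ and is never recovered.

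The delicate case is iii), and this is where the main work lies. Under (A5) there are two frequencies $\omega_+>\omega_->0$, giving two arithmetic ladders of critical delays with common differences $2\pi/\omega_+<2\pi/\omega_-$; thus the destabilizing values $\tau_n^+$ are asymptotically denser than the stabilizing values $\tau_n^-$, so the running count $N(\tau)$ — increasing by $2$ at each $\tau_n^+$, decreasing by $2$ at each $\tau_n^-$, and always $\ge0$ — grows without bound and hence can return to $0$ only finitely many times, which yields the existence of a finite $k$. The hard part I would have to establish carefully is that this density heuristic actually produces the precise interlacing
$$\tau_0^+<\tau_0^-<\tau_1^+<\tau_1^-<\cdots<\tau_{k-1}^+<\tau_{k-1}^-<\tau_k^+,$$
for otherwise a stabilizing $\tau_n^-$ might fall where $N>0$ is not yet reduced to zero and would fail to restore stability. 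Pinning down the ordering of the two sequences up to the index $k$, verifying that on each interval $(\tau_{j-1}^-,\tau_j^+)$ exactly $j$ crossings of each type have occurred so that $N=0$ there, and identifying $k$ as the last index for which a stabilizing crossing still precedes the following destabilizing one, is the genuine obstacle; it is precisely the combinatorial content attributed to \citet{cooke1982discrete}.
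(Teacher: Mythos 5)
Your proposal is correct in outline and follows the same route as the paper --- necessarily so, because the paper does not actually prove this proposition: it is quoted from \citet{cooke1982discrete} (pp.~605--607), and everything the paper does establish beforehand (simplicity of the purely imaginary roots, the dichotomy a)--c) for $\omega_{\pm}^2$, the sign computation for $d(\Re \lambda)/d\tau$, and the formula (\ref{2.11}) for $\tau_j^{\pm}$) is exactly the set of ingredients your root-counting continuation assembles. Given those ingredients, your treatment of parts i) and ii) is complete, and your deferral of the combinatorial core of iii) to the reference mirrors what the paper itself does. For what it is worth, the obstacle you flag can be closed with the spacing observation you already made: since $2\pi/\omega_+ < 2\pi/\omega_-$, every interval between two consecutive stabilizing crossings $\tau_j^-$, $\tau_{j+1}^-$ has length exceeding the spacing of the destabilizing lattice and therefore contains at least one destabilizing crossing, so the count $N(\tau)$ can never descend by $4$ without an intervening increase; consequently $N$ can return to $0$ only while the two sequences strictly alternate, $\tau_0^+ < \tau_0^- < \tau_1^+ < \cdots$, and once two destabilizing crossings occur with no stabilizing crossing between them (which must eventually happen, again because the $+$ lattice is denser), one has $N \geq 4$ and stability is lost permanently. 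This simultaneously yields the interlacing you were worried about, the identification of the finite switch count $k$ as the index at which the alternation first fails, and the persistence of instability for $\tau > \tau_k^+$.
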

\subsection{Stability analysis of the model}
Recall that the points $E_1=(0,0)$ and $E_2=(K,0)$  are equilibria of system (\ref{eq0}), furthermore if (\ref{3.8}) holds, then there exists at least one positive equilibrium $E_3=(x_*,y_*)$. 

By straightforward calculations it is possible to deduce that the equilibrium $E_1$ is locally unstable, so we focus on the equilibria points $E_2$ and $E_3$.

\subsubsection{Stability of equilibrium $(K,0)$}

We begin by considering the case where the system (\ref{eq0}) has only equilibria $E_1$ and $E_2$. In particular, when $\beta>\gamma$ the condition (\ref{3.8}) is not satisfied and we have the following global stability result of $E_2$.

\begin{theorem}
\label{Theo3}
    If $\beta >\gamma$, then any solution $x(t,\phi)$ such that $\phi > 0$ converges to $(K,0)$ as $t$ tends to infinity. 
\end{theorem}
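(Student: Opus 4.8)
The plan is to decouple the problem: first drive the predator component $y(t)$ to zero using the hypothesis $\beta>\gamma$, and then exploit the fact that, once predation pressure vanishes, the prey equation behaves like a logistic equation and forces $x(t)\to K$. Throughout I would use Theorem \ref{Theo1}, which gives a solution defined for all $t\ge 0$ with $x(t),y(t)\ge 0$; moreover, since $x=0$ is invariant for the prey equation and $\phi>0$, uniqueness yields $x(t)>0$ for every $t\ge 0$, which I will need for the logarithmic estimate below.

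First I would control the predator. Using $\gamma s^2/(a^2+s^2)\le\gamma$ for all $s$ and $e^{-y/N}\le 1$ for $y\ge 0$, the second equation of (\ref{eq0}) gives the differential inequality
\[
\dot y(t)\le -\beta y(t)+\gamma\,y(t-\tau)\le -\beta y(t)+\gamma\sup_{t-\tau\le s\le t}y(s).
\]
Since $y\ge 0$ and $\beta>\gamma>0$, I would invoke the Halanay inequality (equivalently, a quasimonotone comparison with the linear delay equation $\dot u=-\beta u+\gamma u(t-\tau)$, whose characteristic roots satisfy $\lambda+\beta=\gamma e^{-\lambda\tau}$ and hence, whenever $\operatorname{Re}\lambda\ge 0$, would give the contradiction $|\lambda+\beta|\ge\operatorname{Re}\lambda+\beta>\gamma\ge\gamma e^{-\tau\operatorname{Re}\lambda}=|\lambda+\beta|$, so all roots lie in the left half-plane). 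Either route yields $y(t)\to 0$ as $t\to\infty$. I expect this to be the main obstacle: the delayed term $y(t-\tau)$ blocks a naive scalar ODE comparison, and the key observation making it tractable is that the delayed coefficient is positive, so the problem is quasimonotone and the Halanay/comparison machinery applies.

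Next I would pin down the prey. The upper estimate is immediate: discarding the nonnegative predation term, $\dot x\le r(1-x/K)x$, so comparison with the logistic equation gives $\limsup_{t\to\infty}x(t)\le K$. For the lower estimate I would use the logarithmic derivative,
\[
\frac{d}{dt}\ln x(t)=r\Big(1-\frac{x(t)}{K}\Big)-\frac{\gamma x(t)}{a^2+x(t)^2}\,y(t).
\]
Because $\gamma s/(a^2+s^2)\le\gamma/(2a)$ for all $s\ge 0$, the perturbation is bounded by $\tfrac{\gamma}{2a}y(t)\to 0$. Hence for each $\eta>0$ there is $T_\eta$ with $\frac{d}{dt}\ln x(t)\ge r(1-x(t)/K)-\eta$ for $t\ge T_\eta$, and comparison with the logistic equation of growth rate $r-\eta$ and carrying capacity $K(1-\eta/r)$ yields $\liminf_{t\to\infty}x(t)\ge K(1-\eta/r)$.

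Finally, letting $\eta\to 0^+$ gives $\liminf_{t\to\infty}x(t)\ge K$, which together with $\limsup_{t\to\infty}x(t)\le K$ forces $x(t)\to K$; combined with $y(t)\to 0$ this establishes $x(t,\phi)\to(K,0)$. Beyond the predator step, the only points requiring care are the positivity $x(t)>0$ (needed to take the logarithm) and the uniform bound $\gamma s/(a^2+s^2)\le\gamma/(2a)$, both of which are elementary.
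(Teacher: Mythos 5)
Your proposal is correct, and it takes a genuinely different route from the paper's proof. The paper argues via a Liapunov functional
$V(\phi)=\phi_2(0)+\gamma\int_{-\tau}^{0}\frac{\phi_1(\theta)^2}{a^2+\phi_1(\theta)^2}\,\phi_2(\theta)e^{-\phi_2(\theta)/N}\,d\theta$,
chosen so that the delayed recruitment term cancels exactly and
$\dot V(x_t)=\bigl(\gamma\tfrac{x^2(t)}{a^2+x^2(t)}e^{-y(t)/N}-\beta\bigr)y(t)\le(\gamma-\beta)y(t)\le 0$,
and then concludes by the LaSalle invariance principle. You instead decouple the system: first the Halanay inequality (equivalently, quasimonotone comparison with the linear equation $\dot u=-\beta u+\gamma u(t-\tau)$, whose characteristic roots you correctly show lie in the open left half-plane precisely because $\beta>\gamma$) forces $y(t)\to 0$, in fact exponentially; then the prey equation becomes an asymptotically autonomous perturbation of the logistic equation, and the squeeze $\limsup_{t\to\infty} x(t)\le K\le\liminf_{t\to\infty} x(t)$ follows from elementary scalar comparison. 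The trade-off is instructive: the paper's functional absorbs the delay in one stroke, but its LaSalle step needs boundedness and precompactness of the orbit and a careful identification of the maximal invariant subset of $\{\dot V=0\}$ --- a set which a priori consists of all states with vanishing predator component at time zero, and hence carries the full logistic dynamics, including $(0,0)$ and its heteroclinic connection to $(K,0)$, so the prey limit $x\to K$ still requires justification inside that framework. Your version sidesteps the invariance machinery entirely, isolates exactly where the hypothesis $\beta>\gamma$ enters (only against the predator equation), yields an explicit decay rate for $y$, and settles the prey limit by a self-contained two-sided comparison; the only extra inputs are the Halanay/quasimonotone comparison toolkit for delay equations and the positivity $x(t)>0$, which you correctly secure from uniqueness before taking logarithms.
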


\begin{proof}
   Consider the Liapunov functional defined by $V: \mathcal{C}_0 \rightarrow \mathbb{R}$ given by
\begin{equation}
    V(\phi) = \phi_2(0) + \gamma \int_{-\tau}^{0} \frac{\phi_1(\theta)^2}{a^2 + \phi_1(\theta)^2} \phi_2(\theta) e^{-\frac{\phi_2(\theta)}{N}} d\theta.
\end{equation}
Note that if $x(t,\phi)$ is a solution of system (\ref{eq0}) with $\phi \in \mathcal{C}_0$, then $x_t(\phi) \in \mathcal{C}_0$ for $t \geq 0$, and
\begin{equation}
    V(x_t) = y(t) + \gamma \int_{-\tau}^{0} \frac{x(t+\theta)^2}{a^2 + x(t+\theta)^2} y(t+\theta) e^{-\frac{y(t+\theta)}{N}} d\theta.
\end{equation}
Observing that $V(x_t)$ is a function of $t \in [0,\infty)$, it is differentiable, and
\begin{equation}
    \dot{V}(x_t) = \left(\gamma \frac{x^2(t)}{a^2+x^2(t)}e^{-\frac{y(t)}{N}} - \beta\right) y(t) \leq \left(\gamma - \beta\right) y(t).
\end{equation}
Since $\beta > \gamma$, it follows that $t \mapsto V(x_t(\phi))$ decreases on $[0,+\infty)$. Assuming $\phi \neq (K,0)$, we have $V(\phi) > 0$. Next, we define $G = \{ \psi \in \mathcal{C}_0: V(\psi) \leq V(\phi)\}$. Then $G$ is closed, positively invariant, and contains $x_t(\phi)$ for all $t \geq 0$. Our calculations above, with $\phi = \psi \in G$ and $t = 0$, show that $\dot{V}(\phi) \leq 0$. Furthermore, Theorem \ref{Theo1} guarantees that every orbit of (\ref{eq0}) is bounded. By the LaSalle invariance principle (see, for instance, Theorem 5.17 \cite{smith2011introduction}), $\omega(\phi)$ is a compact nonempty subset of the maximal invariant subset $S = \{\psi \in G: \dot{V}(\psi) = 0\} = \{\psi \in G: \psi(0) = (K,0)\}$. However, any invariant subset $A$ of $S$ must satisfy $A = \{(K,0)\}$ because for each $\psi \in A$, the corresponding solution $x(t) := x(t,\psi)$ satisfies $x_t \in A \subset S$ for $t \geq 0$, which implies $x_{t}(-\tau) = (K,0)$. For $0 \leq t \leq \tau$, this implies $\psi = (K,0)$.
\end{proof}

In addition, for $\beta \leq \gamma$,  we have the following local stability result of $E_2$.

\begin{theorem}
\label{Theo2}
Assume that
\begin{equation}\tag{H1}\label{H1}
\beta>\frac{\gamma  K^2}{a^2+K^2},
\end{equation}
then the equilibrium $(K,0)$  of system (\ref{eq0})  is locally absolutely stable (locally asymptotically stable for all $\tau \geq 0$).



\end{theorem}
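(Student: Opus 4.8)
The plan is to reduce everything to the already-established root-distribution result, Proposition~\ref{Prop1}~i). The strategy is: evaluate the linearization matrices $A$ and $B$ at $\mathbf{u}=(K,0)$, read off the coefficients $m,n,p,q$ of the transcendental equation (\ref{ET2}), and then verify that hypotheses (A1)--(A3) all follow from (\ref{H1}), at which point absolute stability is immediate.

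First I would substitute $u_1=K$, $u_2=0$ into the formulas for $a_{11},a_{12},a_{22},b_{21},b_{22}$. Since $u_2=0$ annihilates the two terms carrying a factor $u_2$ (and makes $e^{-u_2/N}=1$), I expect $a_{11}=-r$, $a_{22}=-\beta$, $a_{12}=b_{22}=\gamma K^2/(a^2+K^2)$, and $b_{21}=0$. Writing $c:=\gamma K^2/(a^2+K^2)$ and inserting these into the characteristic equation (\ref{2.6}) should give
\[
\lambda^2+(r+\beta)\lambda+r\beta-e^{-\lambda\tau}\,c(\lambda+r)=0,
\]
so that matching with (\ref{ET2}) yields $m=r+\beta$, $n=-c$, $p=r\beta$, $q=-cr$. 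The vanishing of $b_{21}$ (which makes the cross term $a_{12}b_{21}$ disappear) is what lets the characteristic polynomial factor cleanly through $(\lambda+r)$, and this is the structural reason the coefficients come out so simply.

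Next I would check the three hypotheses. Condition (A1) reads $m+n=r+(\beta-c)>0$ and (A2) reads $p+q=r(\beta-c)>0$; both are immediate since (\ref{H1}) says precisely $\beta>c$ and $r>0$. For (A3) I would compute $p^2-q^2=r^2(\beta^2-c^2)=r^2(\beta-c)(\beta+c)>0$, again using $\beta>c>0$, and then $n^2-m^2+2p=c^2-(r+\beta)^2+2r\beta=c^2-r^2-\beta^2$, which is negative because $\beta>c$ forces $\beta^2>c^2$, whence $c^2-r^2-\beta^2<-r^2<0$. With (A1)--(A3) in hand, Proposition~\ref{Prop1}~i) asserts that every root of (\ref{2.6}) has negative real part for all $\tau\ge 0$, i.e. $(K,0)$ is locally absolutely stable.

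The computations are elementary, so there is no deep obstacle; the one place that genuinely requires care is (A3), specifically the sign of $n^2-m^2+2p$, since that is where $\beta>c$ must be invoked a second time and where a careless estimate could go wrong. I would also record the harmless observation that $K>0$ gives $c>0$, so $n=-c\neq0$ and the simplicity-of-roots argument underlying Proposition~\ref{Prop1} is not disturbed.
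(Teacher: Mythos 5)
Your proof is correct and follows essentially the same route as the paper: linearize at $(K,0)$, read off $m=r+\beta$, $n=-\gamma K^2/(a^2+K^2)$, $p=r\beta$, $q=-\gamma rK^2/(a^2+K^2)$, verify (A1)--(A3) from $\beta>\gamma K^2/(a^2+K^2)$ (including the key simplification $n^2-m^2+2p=c^2-r^2-\beta^2<0$), and invoke Proposition~\ref{prop:C-G}~i). The only additions beyond the paper's argument are the explicit observation that $b_{21}=0$ forces the factorization through $(\lambda+r)$ and the remark on $n\neq 0$, both harmless.
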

\begin{proof}
    We linealize system (\ref{eq0}) around the equilibrium $(K,0)$, and the characteristic equation (\ref{2.6}) becomes:
\begin{displaymath}
\lambda^2+\underbrace{\left(r+\beta \right)}_{m}\lambda + \underbrace{\left(-\frac{\gamma K^2}{a^2+K^2}\right)}_{n}\lambda e^{-\lambda \tau} +\underbrace{r\beta}_{p}+\underbrace{\left(-\frac{\gamma r K^2}{a^2+K^2}\right)}_{q}e^{-\lambda \tau} =0.
\end{displaymath}
From assumption (\ref{H1})  we have 
\begin{displaymath}
r+\beta-\frac{\gamma K^2}{a^2+K^2}>0,\quad \textnormal{and}\quad r\left(\beta-\frac{\gamma  K^2}{a^2+K^2}\right)>0,   
\end{displaymath}
hence $m+n>0$ and $p+q>0$ hold. Furthermore, since $\beta+\frac{\gamma  K^2}{a^2+K^2}$ is positive, it follows that:
\begin{displaymath}
    \beta^2-\left(\frac{\gamma  K^2}{a^2+K^2}\right)^2>0,
\end{displaymath}
whence 
\begin{displaymath}
    2r\beta+\frac{\gamma^2 K^4}{(a^2+K^2)^2}-(r+\beta)^2<0, \quad  \textnormal{and}\quad r^2\beta^2-\frac{r^2\gamma^2 K^4}{(a^2+K^2)^2}>0.
\end{displaymath}
Apply Proposition \ref{prop:C-G} i), and the result is obtained.  
\end{proof}

\subsubsection{Local stability of equilibrium $(x_*,y_*)$}
Let us recall that a positive equilibrium can exist if and only if the condition (\ref{3.8}) is satisfied, or equivalently 
$$
\gamma\frac{ K^2}{a^2+K^2}>\beta.
$$
In order to ease the analysis of local stability of the positive equilibrium $(x_*,y_*)$ we introduce the following constants:

\begin{equation}\label{mnpq}
    \begin{array}{c}
\displaystyle m = r\left(1-\frac{x_*}{K}\right)  \left(\frac{2a^2}{a^2+x_*^2}-1\right) +\beta +\frac{rx_*}{K},\quad 
 n= \beta \left(\frac{y_*}{N}-1\right),\\ \\
 \displaystyle p = -\beta(\beta-m),\quad
  q= -\beta r\left(1-\frac{2x_*}{K}\right).
    \end{array}
\end{equation}
\begin{theorem}
\label{Theo4}
Let assume that (\ref{3.8}) holds, if one of the positive equilibria $E_3=(x_*,y_*)$ verifies:
\begin{equation}\tag{H2}\label{H2}
   \frac{\frac{rx_*}{K} +\beta \frac{y_*}{N}}{r\left(1-\frac{x_*}{K}\right)}>\left(1-  \frac{2a^2}{a^2+x_*^2}\right), 
\end{equation}
and
\begin{equation}\tag{H3}\label{H3}
 \frac{ \gamma x_*y_*a^2}{(a^2+x_*^2)^2}>r\left(1-\frac{2x_*}{K}\right).  
\end{equation}

\begin{enumerate}
\item[a)] If $2p+n^2-m^2<0$ holds, then the positive equilibrium $(x_*,y_*)$  of system (\ref{eq0})  is locally absolutely stable.

\item[b)] If $2p+n^2-m^2> 2\sqrt{p^2-q^2}$ holds, then there exists a positive integer $k$ such that there are $k$ changes from stability to instability to stability. In other words, the positive equilibrium  $(x_*,y_*)$  of system (\ref{eq0})  is asymptotically stable when
\[\tau\in [0,\tau_{0}^{+})\cup (\tau_{0}^{-} ,\tau_{1}^{+})\cup\cdots\cup(\tau_{k-1}^{-} ,\tau_{k}^{+}), \] 
and unstable when \[\tau\in [\tau_{0}^{+},\tau_{0}^{-})\cup (\tau_{1}^{+},\tau_{1}^{-})\cup \cdots \cup [\tau_{k-1}^{+},\tau_{k-1}^{-})\cup (\tau_{k}^{+} ,+\infty), \]
with $\tau^{\pm}_{j}$ ($j=0,1,2,\cdots$) defined as in (\ref{2.11}).
\end{enumerate}
\end{theorem}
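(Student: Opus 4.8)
The plan is to observe that the characteristic equation (\ref{2.6}) evaluated at the positive equilibrium $(x_*,y_*)$ is precisely the transcendental polynomial (\ref{ET2}) with the coefficients $m,n,p,q$ recorded in (\ref{mnpq}). Consequently Theorem \ref{Theo4} reduces entirely to deciding which of the sign hypotheses (A1)--(A5) hold and then invoking the appropriate part of Proposition \ref{prop:C-G}: part a) should follow from Proposition \ref{prop:C-G} i) once (A1), (A2), (A3) are checked, and part b) from Proposition \ref{prop:C-G} iii) once (A1), (A2), (A5) are checked. Thus the entire argument amounts to translating the hypotheses (\ref{H2}), (\ref{H3}) and the two case-specific inequalities into conditions on $m,n,p,q$.

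First I would establish (A1), that is $m+n>0$. Adding the expressions for $m$ and $n$ in (\ref{mnpq}) the $\pm\beta$ terms cancel, leaving
\[
m+n = r\Bigl(1-\tfrac{x_*}{K}\Bigr)\Bigl(\tfrac{2a^2}{a^2+x_*^2}-1\Bigr)+\tfrac{rx_*}{K}+\beta\tfrac{y_*}{N}.
\]
Since $x_*\in(0,K)$ forces $r(1-x_*/K)>0$, dividing by this factor shows $m+n>0$ is exactly the stated inequality (\ref{H2}); hence (A1) is equivalent to (\ref{H2}). For (A2) and the auxiliary inequality $p^2-q^2>0$ I would use the equilibrium identities (\ref{3.7}). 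Setting $C:=\tfrac{2\gamma x_* y_* a^2}{(a^2+x_*^2)^2}$ and invoking the first relation in (\ref{3.7}) in the form $\gamma x_* y_* = r(1-x_*/K)(a^2+x_*^2)$ reduces $C=\tfrac{2a^2}{a^2+x_*^2}r(1-x_*/K)$, which is exactly the mixed term in $m$. With this substitution a direct computation from (\ref{mnpq}) yields the two clean identities
\[
p+q=\beta\bigl[\,C-2r(1-\tfrac{2x_*}{K})\,\bigr],\qquad p-q=\beta C.
\]
Because $C>0$ for any positive equilibrium, $p-q=\beta C>0$ holds unconditionally, while $p+q>0$ is, after dividing by $\beta>0$ and noting $C/2=\tfrac{\gamma x_* y_* a^2}{(a^2+x_*^2)^2}$, precisely hypothesis (\ref{H3}). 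Multiplying the two identities then gives $p^2-q^2=(p+q)(p-q)>0$ as soon as (\ref{H3}) holds.

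With (A1), (A2) and $p^2-q^2>0$ secured, the two cases are immediate. For part a), the extra assumption $2p+n^2-m^2<0$ supplies the remaining inequality of (A3) (the other half, $p^2-q^2>0$, being automatic), so (A1)--(A3) hold and Proposition \ref{prop:C-G} i) yields absolute stability. For part b), the assumption $2p+n^2-m^2>2\sqrt{p^2-q^2}$, together with $p^2-q^2>0$ making the root real and nonnegative, gives both $n^2-m^2+2p>0$ and, upon squaring, $(n^2-m^2+2p)^2>4(p^2-q^2)$; combined with $p^2-q^2>0$ this is exactly (A5), and Proposition \ref{prop:C-G} iii) delivers the stability-switch conclusion with the stated $\tau$-intervals.

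The main obstacle is the algebraic reduction in the second paragraph: one must confirm $C=\tfrac{2a^2}{a^2+x_*^2}r(1-x_*/K)$ through the equilibrium relation and carry it through the definitions in (\ref{mnpq}) to obtain the factorized forms $p+q=\beta[C-2r(1-2x_*/K)]$ and $p-q=\beta C$. Everything else is a bookkeeping match of the hypotheses against (A1)--(A5); the only genuine content is that $p-q$ is a positive multiple of $C$, which is what renders $p^2-q^2>0$ free once $p+q>0$ is assumed, and therefore what allows (\ref{H2}) and (\ref{H3}) plus a single extra inequality to settle each case.
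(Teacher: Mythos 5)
Your proposal is correct and follows essentially the same route as the paper's proof: you verify (A1) as a restatement of (\ref{H2}) after the $\beta$-cancellation in $m+n$, use the equilibrium identity $\gamma x_* y_* = r(1-x_*/K)(a^2+x_*^2)$ to obtain the factorizations $p+q=\beta\bigl[C-2r(1-2x_*/K)\bigr]$ and $p-q=\beta C>0$ (which is exactly how the paper gets (A2) from (\ref{H3}) and concludes $p^2-q^2>0$ for free), and then invoke Proposition \ref{prop:C-G} parts i) and iii) for cases a) and b) respectively. The only cosmetic difference is your shorthand $C$ for the mixed term $\tfrac{2\gamma x_* y_* a^2}{(a^2+x_*^2)^2}$, which the paper carries explicitly.
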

\begin{proof}
        We linealize system (\ref{eq0}) around the equilibrium $(x_*,y_*)$, and grouping the terms of the characteristic equation (\ref{2.6}) we obtain:
\begin{displaymath}
    \lambda^2 +\underbrace{(-a_{11}-a_{22})}_{m}\lambda +\underbrace{(-b_{22})}_{n}\lambda e^{-\lambda \tau}  +\underbrace{a_{11}a_{22}}_{p} + \underbrace{ \left(b_{22}a_{11} -a_{12}b_{21}\right)}_{q}e^{-\lambda \tau}  =0.
\end{displaymath}
where

\begin{displaymath}
 \begin{array}{l}
\displaystyle m = \frac{2\gamma x_*y_* a^2}{(a^2+x_*^2)^2}-r\left(1-\frac{2x_*}{K}\right) +\beta= r\left(1-\frac{x_*}{K}\right)  \left(\frac{2a^2}{a^2+x_*^2}-1\right) +\beta +\frac{rx_*}{K},\\ \\
\displaystyle
 n= \frac{\gamma x_*^2}{a^2+x_*^2} e^{-\frac{y_*}{N}}\left(\frac{y_*}{N}-1\right)= \beta \left(\frac{y_*}{N}-1\right),\\ \\
 \displaystyle p = -\beta\left(r\left(1-\frac{2x_*}{K}\right)-\frac{2\gamma x_*y_* a^2}{(a^2+x_*^2)^2}\right)=-\beta(\beta-m),\\ \\
 \displaystyle q=p-\frac{\gamma x_*^2}{a^2+x_*^2}\frac{2\gamma x_*y_*a^2}{(a^2+x_*^2)}e^{-\frac{y_*}{N}}=p-\beta\frac{2\gamma x_*y_*a^2}{(a^2+x_*^2)}=-\beta r\left(1-\frac{2x_*}{K}\right).
    \end{array}   
\end{displaymath}
Since  $(x_*,y_*)$ is a positive solution of system (\ref{eq_equilibrio}), the following identities hold
\begin{displaymath}
    \frac{\gamma x_* y_*}{a^2+x^2_*}=r\left(1-\frac{x_*}{K}\right),\quad \mbox{and}\quad   \beta=\frac{\gamma x_*^2}{a^2+x^2_*}e^{-\frac{y_*}{N}},
\end{displaymath}
which allow writing $m$, $n$, $p$ and $q$ as in (\ref{mnpq}).

By direct computation, the following inequality can be obtained from (\ref{H2}) 
\begin{displaymath}
      r\left(1-\frac{x_*}{K}\right)  \left(\frac{2a^2}{a^2+x_*^2} -1\right)  +\frac{rx_*}{K} +\beta \frac{y_*}{N}>0,
\end{displaymath}
and consequently $m+n>0$ holds. In a similar way, straightforwardly from (\ref{H3}) we obtain 
$$p+q=-\beta \left( \beta -m +r\left(1-\frac{2x_*}{K}\right)\right)= \beta \left(\frac{2 \gamma x_*y_*a^2}{(a^2+x_*^2)^2}-2r\left(1-\frac{2x_*}{K}\right)\right)>0.$$
Moreover, we note that 
$p-q= \frac{2 \beta  \gamma x_*y_*a^2}{(a^2+x_*^2)^2}>0$, and we conclude that $p^2-q^2>0$. 

Consequently, if $2p+n^2-m^2<0$ holds, we can apply Proposition \ref{prop:C-G} i), and the part a) of the theorem follows. On the other hand, if $2p+n^2-m^2> 2\sqrt{p^2-q^2}$ holds, then Proposition \ref{prop:C-G} iii) implies part b).   
\end{proof}

\subsection{Numeric Stability Analysis}
An algorithm has been developed to calculate and evaluate the equilibria of System \ref{eq0}. This algorithm was implemented as a function in the R language, which takes as input the model parameters and a reference delay value. The diagram in Figure \ref{program} summarizes the functioning of the program. To compute the system's equilibrium points, the `multiroot' function from the `deSolve' library in R was utilized. This function was executed with four different fixed points for $x$ and $y$, from which the iteration starts to find solutions. The function takes as input all the model parameters, initial values of $x$ and $y$, and a delay value. Follow this method is possible to find five different equilibrium points for the system. In cases where critical delays values are identified (according to Theorem 
\ref{Theo4}), the entered value is excluded, and stability behavior is studied based on these new values. The code corresponding to this function and the simulations conducted in this work are available on Posit Cloud\footnote{\url{https://posit.cloud/content/6889846}}. Below are some examples of this function that illustrate the behavior of the system and some of the theorems of this article.

\medskip

\paragraph{\textbf{Absolute stability for positive equilibrium}}
In the case of Figure \ref{exa1}, we identify equilibria denoted as $E_1=(0,0)$, $E_2=(200, 0)$, $E_3=(190.11, 0.94)$, $E_3^*=(9.89, 0.98)$, and $E_3^{\dagger}=(0.06, 0.01)$. We note from our findings that $E_1$ is unstable and since Theorems \ref{Theo2} and \ref{Theo3} are inapplicable, we are unable to determine the stability of the equilibrium point $(200,0)$. On the other hand, $E_3$ is the only positive equilibrium that satisfies the conditions of Theorem \ref{Theo4}. In fact, this result allows us to ensure that $E_3$ is absolutely stable (local asymptotic stability of this equilibrium for all $\tau \geq 0$). This implies that for any delay, the system will converge towards this stable solution, reflecting characteristics of robust stability under varying time delays.

\medskip

\paragraph{\textbf{Changes from stability to instability to stability for positive equilibrium}}
In Figure \ref{exa2}, the equilibria are $E_1=(0,0)$, $E_2=(300, 0)$, $E_3=(204.68,14.45 )$ and $E_3^*=(95.3, 14.45)$. Using our findings on the system, we observe the following conclusions: the origin is unstable, Theorems \ref{Theo2} and \ref{Theo3} are inapplicable, hence the stability of the equilibrium $(300,0)$ cannot be determined, and Theorem \ref{Theo4} indicates that the stability of $E_3=(204.68,14.45)$ is affected by the delay $\tau$. For values of $\tau$ within certain ranges, the system's trajectories will converge to the equilibrium point $E_3$, depending on the initial conditions and the specific value of $\tau$. As $\tau$ increases beyond the respective threshold values$\tau_0+$, this equilibrium point loses stability, leading to possible oscillatory or divergent behavior in the system trajectories. For low delay, $E_3=(204.68,14.45)$ is locally asymptotically stable, but for $\tau = 40.7$ the first change in stability occurs, generating an oscillatory pattern around this point.

\begin{landscape}
\begin{figure*}[h]
\centering
\includegraphics[scale=0.6]{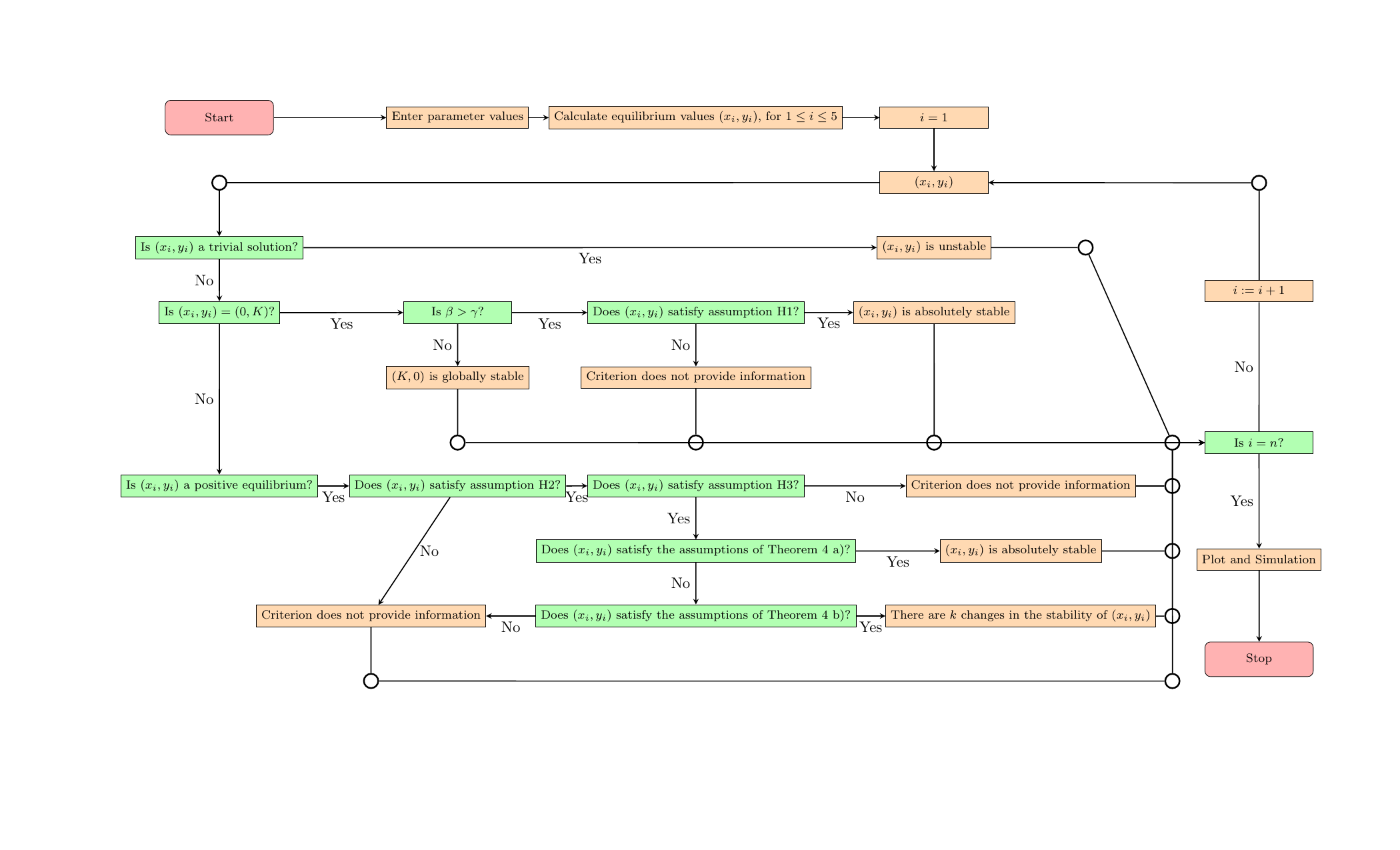}%
\caption{\centering{Stability Analysis Function Diagram}}
\label{program}
\end{figure*}
\end{landscape}

\begin{figure*}[h]
\centering
\fbox{\includegraphics[width=1\textwidth]{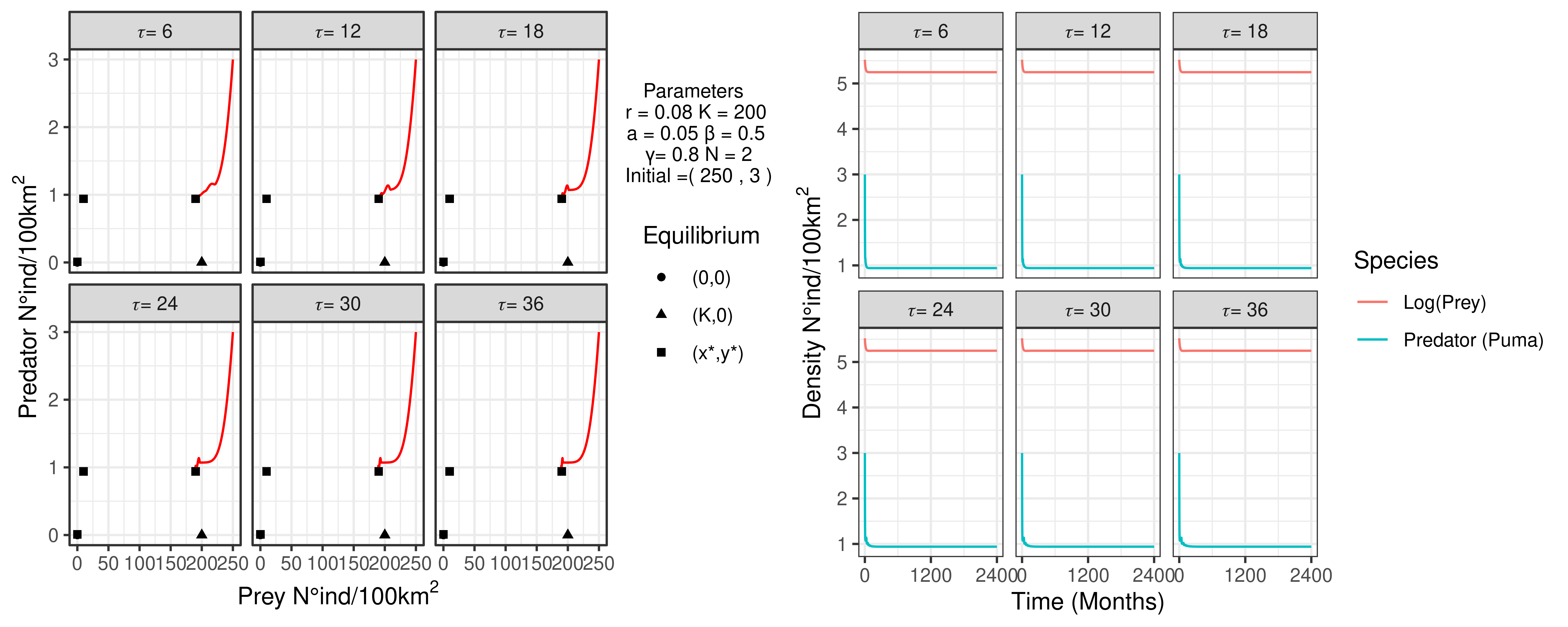}}
\caption{\centering{Absolute Stability}}
\label{exa1}
\end{figure*}

\begin{figure*}[h]
\centering
\fbox{\includegraphics[width=1\textwidth]{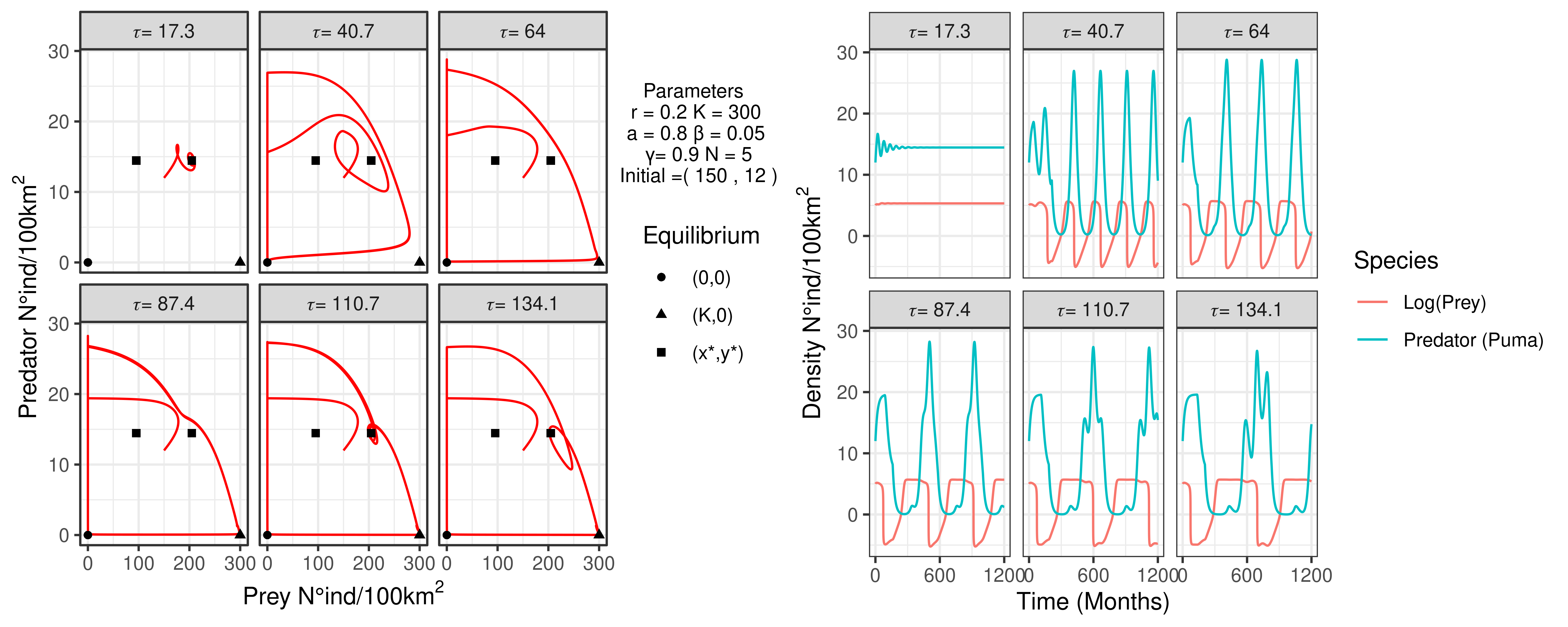}}
\caption{\centering{Changes in Stability Depending on $\tau$}}
\label{exa2}
\end{figure*}
\paragraph{\textbf{Global Stability of $(K,0)$}}

In this example (Figure \ref{exa3}), the equilibria were defined as $E_1=(0,0)$ and $E_2=(50, 0)$, with no positive solutions (condition (\ref{3.8}) is not satisfied). $E_1$ corresponds to an unstable solution, while $E_2$ is globally asymptotically stable for all $\tau > 0$. Indeed, in this example, the condition established in Theorem \ref{Theo3} is met since $\beta >  \gamma $, so consequently the convergence towards $E_2=(50,0)$ is appreciated.

\begin{figure*}[h]
\centering
\fbox{\includegraphics[width=1\textwidth]{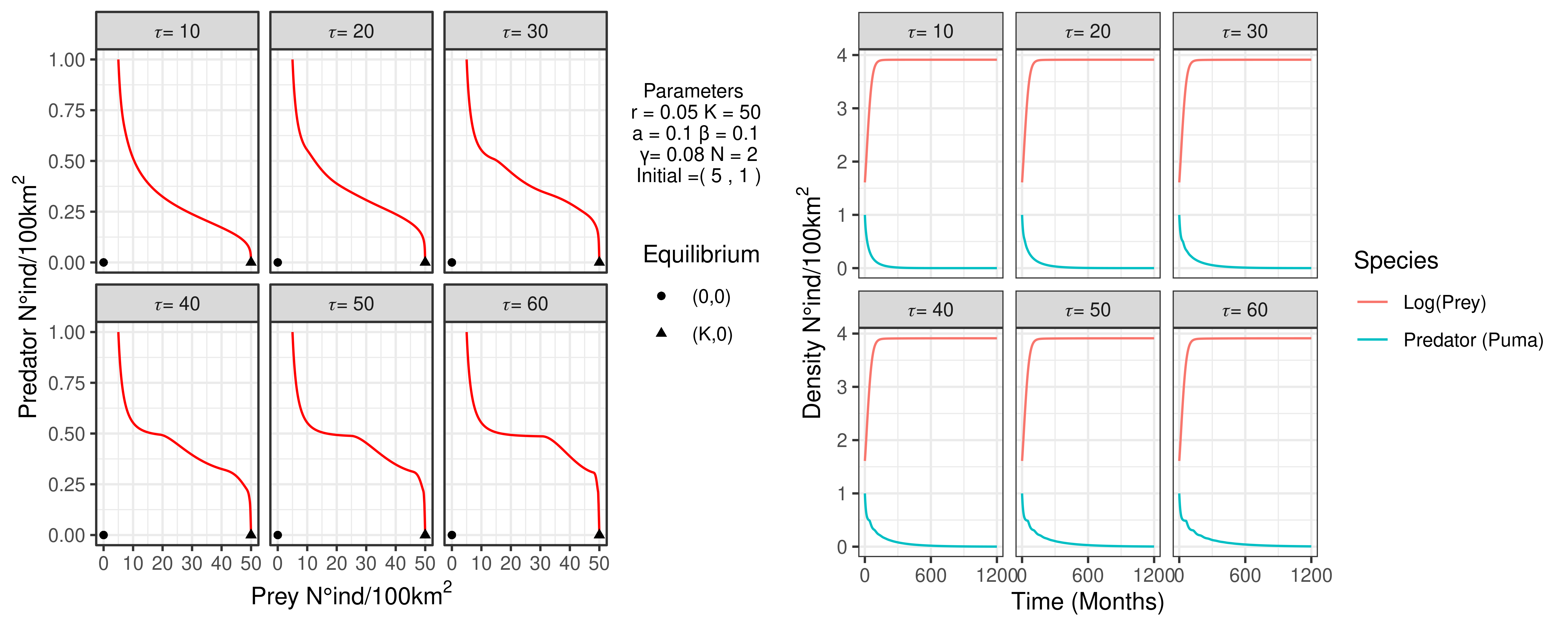}}
\caption{\centering{Stable Solution $(K,0)$ for all $\tau > 0$}}
\label{exa3}
\end{figure*}

It is important to highlight that the stability analysis of a system of this type, which has multiple interacting equilibrium points, is not a trivial exercise. Numerical simulations attempt to exemplify and put into practice the theorems proved in this article. Another technique that can be useful to complement this analysis are stability charts, where it is possible to show the regions in the plane $(\tau, c)$ (where $c$ is some parameter of the model) where the system is stable. Examples of applications of this technique are the works of \citet{kalmar2009stability}, \citet{insperger2011stability}, and \citet{jin2020stability}.\\

\subsection{Model simulations}
For the simulation of the model (\ref{eq0}), the parameter values presented in Table \ref{tab1} were considered. With these combinations of parameters, a total of 216 simulations were performed for the model (\ref{eq0}), and the results were grouped according to the behavior exhibited by the predator and prey populations. The categories in which the results were classified are as follows: Populations in positive equilibrium (72.2\%), predator extinction (26.6\%), and oscillating populations (1.4\%). The prey density graphs were plotted on a logarithmic scale, since their values differ by several orders of magnitude from those of the predators, making it difficult to visualize them accurately. \footnote{Code available on Posit Cloud \url{https://posit.cloud/content/6889846}}
\begin{table*}[ht]
\footnotesize
\caption{\centering{Model simulation parameters}}
    \centering
    \begin{tabular}{llr}
        \toprule
        Parameters & Description & Simulated values\\
        \midrule
        $r$ & Prey reproduction rate & 0.05, 0.1, 0.2 \\
        $K$	& Carrying capacity &	200, 500 \\
        $a$ & Average predator saturation constant & 0.1, 0.5, 0.8\\
        $\gamma$ & Maximum per capita predator consumption rate & 0.1, 0.5, 0.8\\
        $\beta$ &	Predator mortality rate & 0.05, 0.1  \\
        $N$ & Optimal reproduction size for the predator population & 1, 2 \\
        $\tau$ & Period that includes gestation and the time it takes to & 27  \\
             & adulthood for puma & \\
      \bottomrule
    \end{tabular}
    \label{tab2}
\end{table*}
\paragraph{\textbf{Predator extinction}}
This situation occurs when predators have a mortality rate ($\beta$) higher than the largest possible value of the functional response ($\gamma \frac{K^2}{a^2+K^2}$) (Figure \ref{fig4}). In this case, predators have no significant impact on prey, as their per capita consumption parameter ($\gamma$) is low. This leads to the prey population continuing to grow exponentially at their reproduction rate ($r$) until it reaches carrying capacity ($K$), while the predator density approaches zero. Varying the optimal reproduction size parameter for predators ($N$) would not change the observed trend. It is important to note, as pointed out in \citet{rumiz2010roles}, that the role of carnivores is to control the populations of certain species, and as shown in the simulation, an increase in Puma mortality leads to a rapid growth of the prey population. The literature mentions that the loss of this biological control can result in pest cycles that can cause local extinctions of different plant and animal species \citep{rios2009puma}.

\medskip

\begin{figure*}[ht]
\centering
\fbox{\includegraphics[width=1\textwidth]{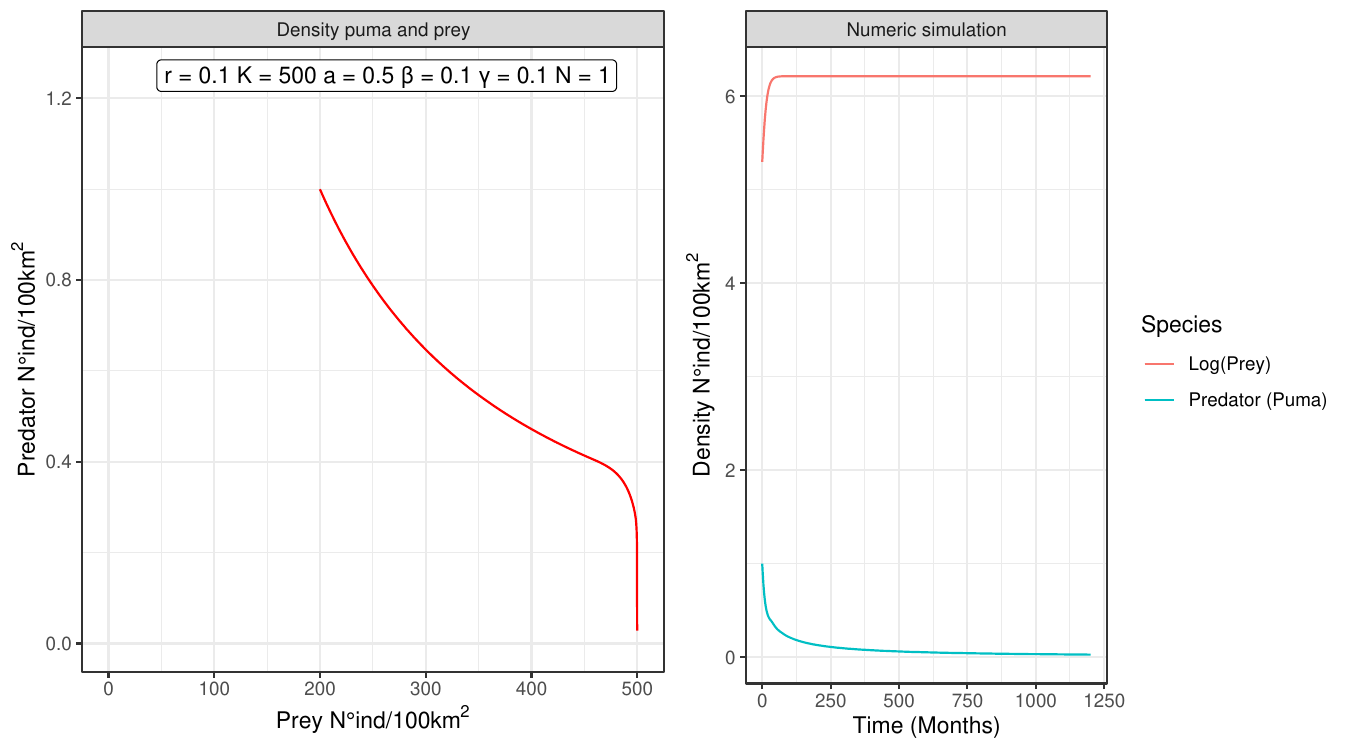}}
\caption{\centering{Predator extinction simulation}}
\label{fig4}
\end{figure*}

\paragraph{\textbf{Populations tend towards a positive equilibrium}}
In this case, the predator mortality rate ($\beta$) is lower than the maximum possible value of the functional response, which, according to Theorem 3, guarantees the existence of positive equilibria. In addition, the saturation constant ($a$) exceeds the maximum per capita predator consumption rate ($\gamma$), leading to the predator population regulating itself as it consumes the prey, converging towards a fixed value. Meanwhile, although the prey population exhibits oscillations in its density, its high reproduction rate ($r$) enables it to remain close to the carrying capacity ($K$) (Figure \ref{fig5}). On the other hand, the low predator mortality rate ($\beta$) allows them to maintain a density higher than that defined by the parameter ($N$).\\
\begin{figure*}[ht]
\centering
\fbox{\includegraphics[width=1\textwidth]{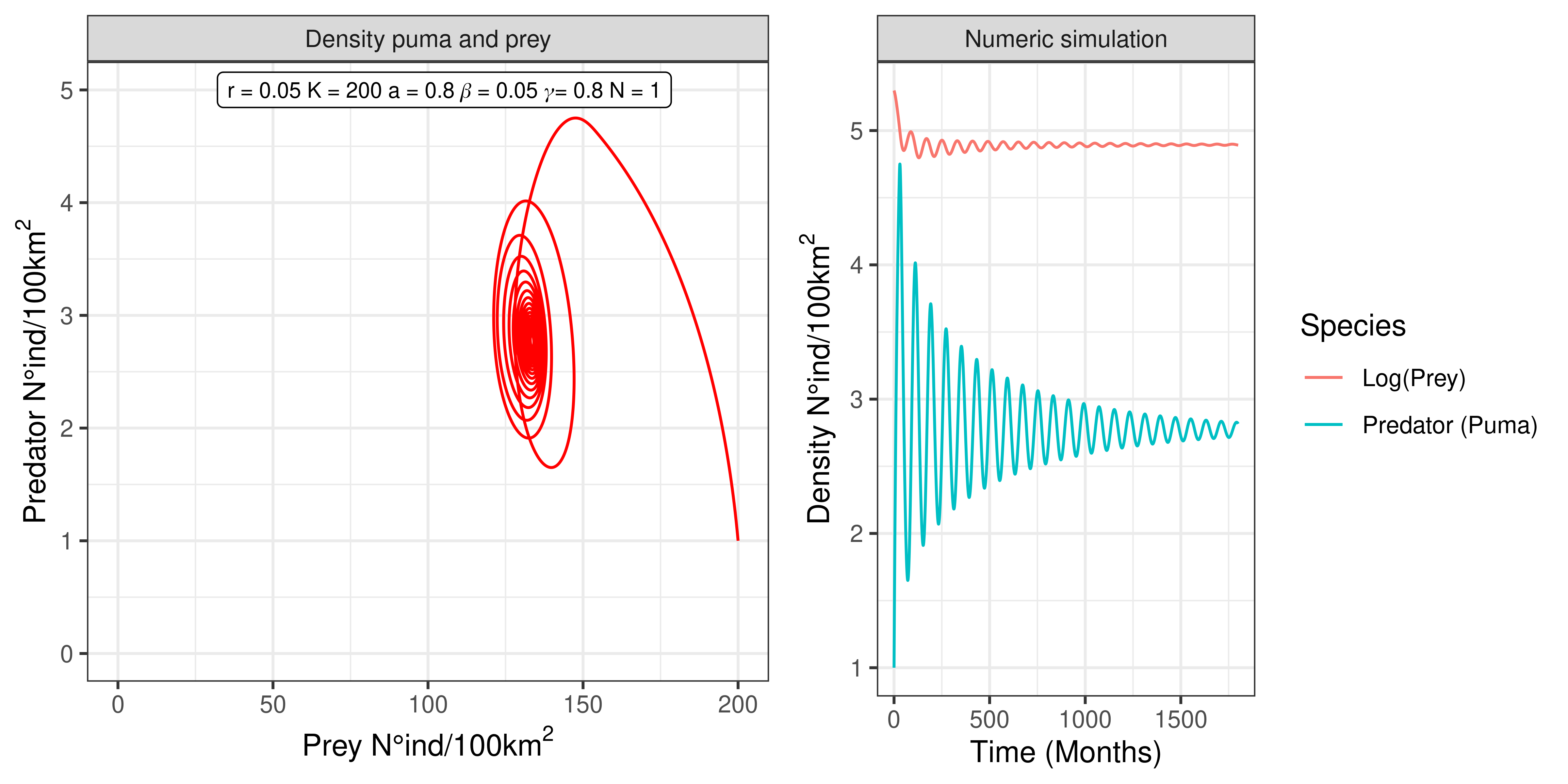}}
\caption{\centering{Simulation of populations in positive equilibrium}}
\label{fig5}
\end{figure*}
\paragraph{\textbf{Oscillating population}}
In this case, each curve exhibits temporally shifted cyclical peaks for both the predator and the prey (Feedback) (Figure \ref{fig6}). Once the puma reaches its maximum density, the prey population declines to such an extent that food begins to become scarce, consequently resulting in a decrease in the predator population, almost to the point of extinction. Subsequently, the number of prey stabilizes and begins to increase, initiating a new cycle. In all simulations that produced this type of result, the prey reproduction rate ($r$) was low and the carrying capacity was established at 200 prey per 100 km², while the predator mortality rate ($\beta$), the maximum per capita predator consumption rate ($\gamma$) and the optimal reproduction size ($N$) were established at their maximum evaluated values. The average saturation constant of predators varied in all cases and did not appear to be influential in this type of outcome. Due to high predator mortality, low birth rate, and limited carrying capacity of prey, neither population reaches a stable state over time. This, combined with the high hunting efficiency of the predators, generates the oscillatory behavior between predator and prey densities. It should be noted that the predator density follows the typical pattern associated with a Nicholson-type model \citep{gurney1980nicholson}.
\begin{figure*}[ht]
\centering
\fbox{\includegraphics[width=1\textwidth]{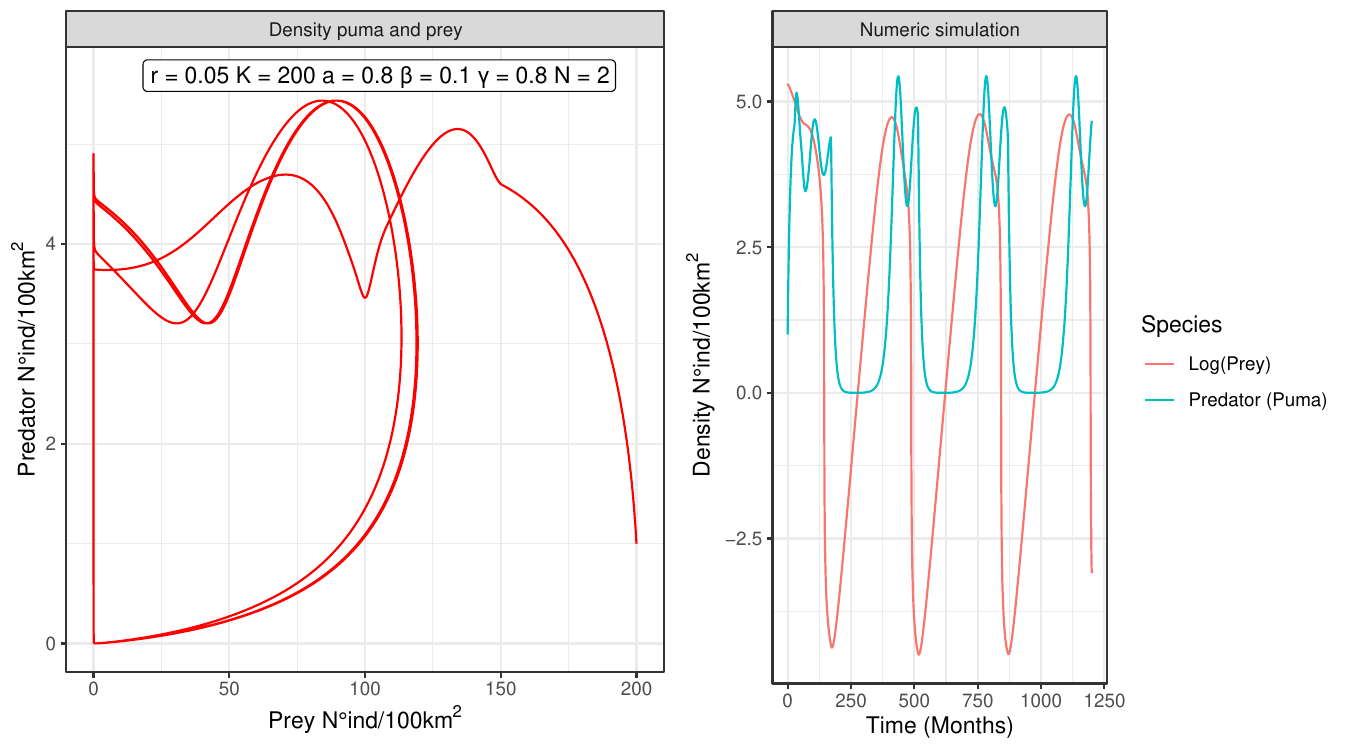}}
\caption{\centering{Simulation of oscillatory densities}}
\label{fig6}
\end{figure*}

\subsection{Simulation of potential scenarios}
\noindent
To complement the conducted analyses, the evaluation of two potential scenarios associated with real disruptions and threats to the ecosystems inhabited by the Puma was included. The proposed scenarios were: 
Reduction of the prey population, and predator population. In each case, systematic elimination of a percentage of prey or Pumas is carried out, as appropriate, when they exceed a certain threshold. 

To conduct the analyses of the scenarios, we followed the numerical procedure outlined in subsection 7.6 of the book by \citet{soetaert2010solving}. We evaluated the scenarios where the populations tend towards a positive equilibrium and the scenarios with oscillatory behavior of the populations.
\subsubsection{Preys removal}
A simulation was conducted where 50\% of the prey population was systematically removed when it reached a certain threshold value. For the case of the positive equilibrium, prey were removed when they reached a value of 150 individuals per km², while for the case of the oscillatory equilibrium, this occurred when 20 individuals per km² were reached. The difference in these values is related to the values that the prey population reaches in the baseline cases (Figure \ref{fig5} and Figure \ref{fig6}).
\begin{figure*}[ht]
\centering
\fbox{\includegraphics[width=1\textwidth]{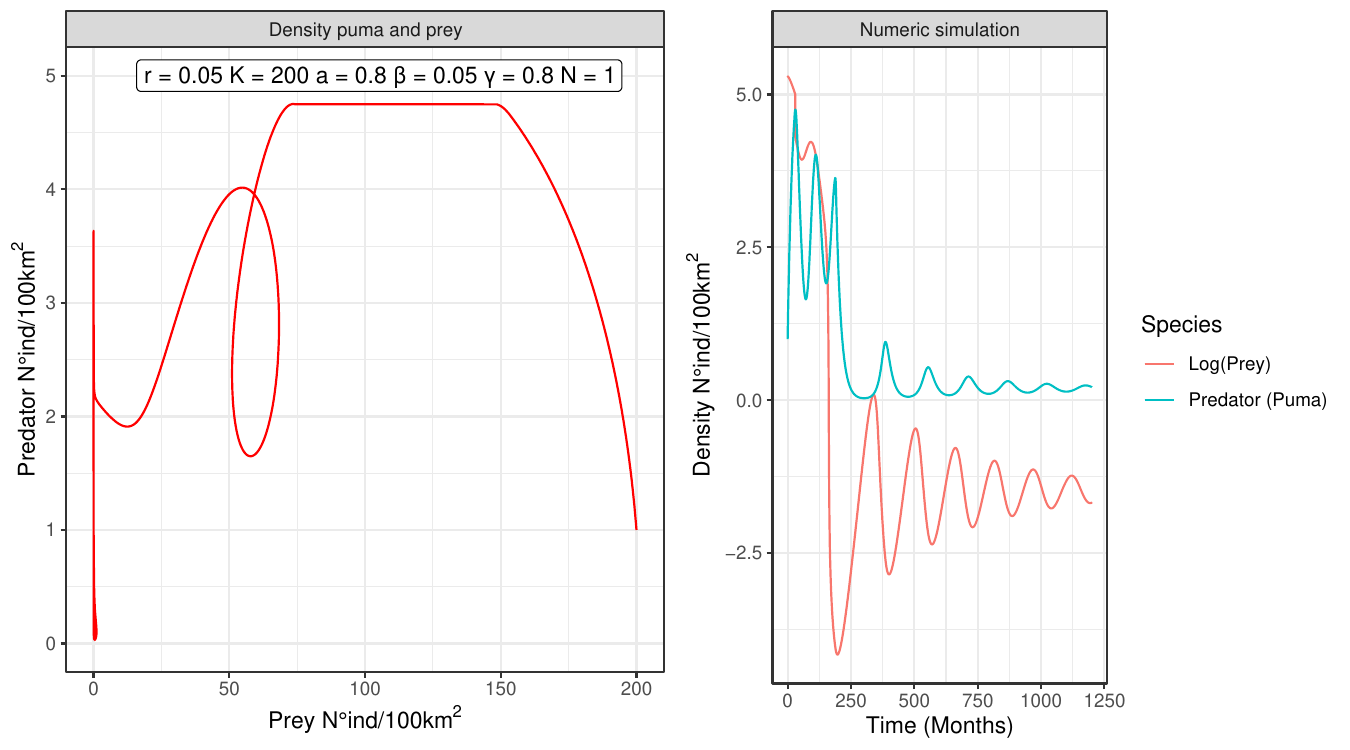}}
\caption{\centering{Case of positive equilibrium with prey removal}}
\label{fig7}
\end{figure*}
In the case of the positive equilibrium, the removal of prey resulted in a significant decrease in their population in the following months. This reduction caused the predator population to tend towards extinction. This type of solution differs drastically from the solution depicted in Figure \ref{fig5}. The prey density falls below the minimum threshold required to sustain the predator population, which is an expected consequence of this perturbation. The general solution of the system implies that both predators and prey dramatically decrease their density, tending to values of the order of 0.1 individuals per km². 
\begin{figure*}[h]
\centering
\fbox{\includegraphics[width=1\textwidth]{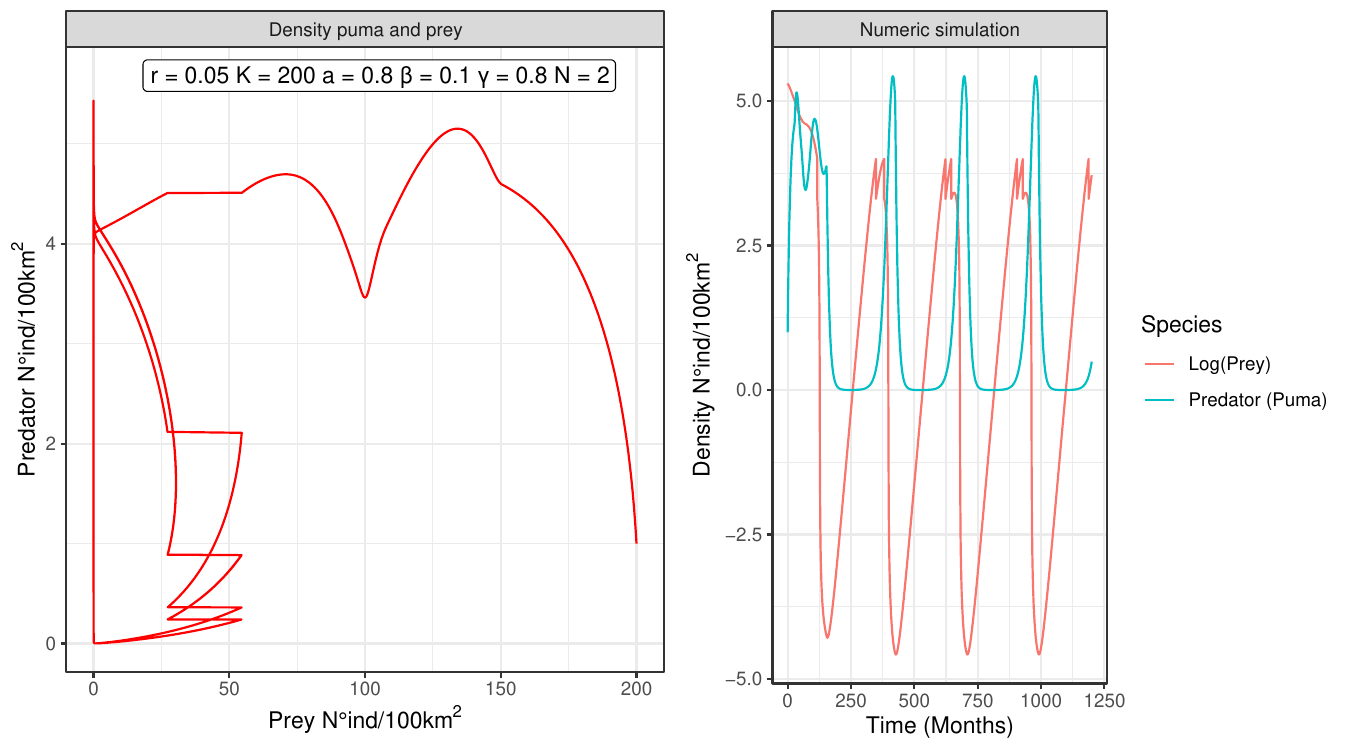}}
\caption{\centering{Oscillatory case with prey removal.}}
\label{fig8}
\end{figure*}
The removal of prey in the oscillatory scenario (Figure \ref{fig8}) leads to changes in the dynamics of the predators, as the characteristic behavior of a population represented by a Nicholson-type model is lost. Despite this, the overall densities of predators and prey still follow a similar pattern to the undisturbed system, where the puma population reaches its peak and then declines rapidly. We can observe that the interactions between the densities of predators and prey persist, but the systematic removal of a fraction of the prey reduces the duration of periods of higher puma abundance.
\subsubsection{Predator removal}
Two scenarios were investigated that involved the removal of 50\% of the predator population. In the first case, known as positive equilibrium, a threshold value of 3 individuals per km² was defined to achieve a reduction of half of the population. In the second case, referred to as the oscillatory case, a threshold of 4 individuals per km² was established.
\begin{figure*}[ht]
\centering
\fbox{\includegraphics[width=1\textwidth]{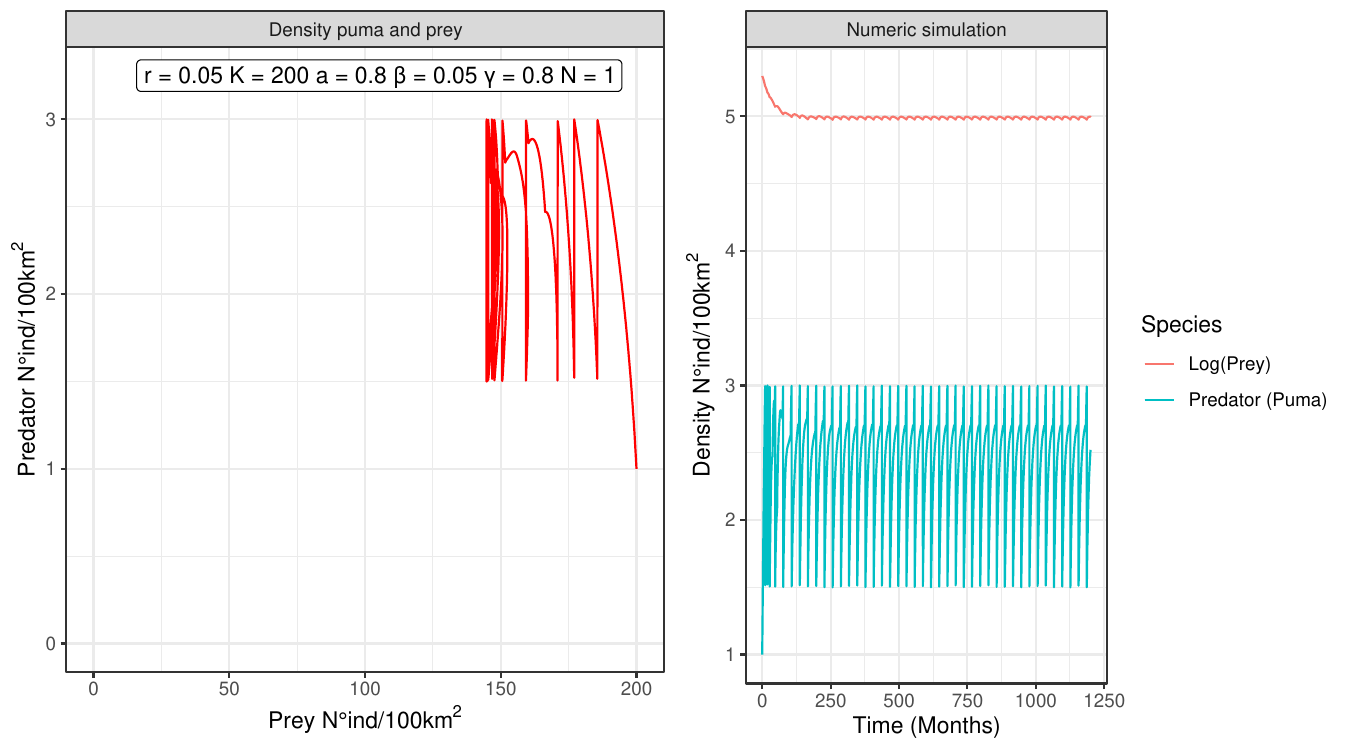}}
\caption{\centering{Positive equilibrium case with predator removal}}
\label{fig9}
\end{figure*}
The reduction in puma density at the positive equilibrium significantly affected the solution behavior of the system (Figure \ref{fig6}). The feedback interactions between predators and prey were completely disrupted. As a result, the prey population decreased to a steady state value, remaining constant over time. However, the puma population exhibited oscillations between the predetermined maximum density and 50
\begin{figure*}[h]
\centering
\fbox{\includegraphics[width=1\textwidth]{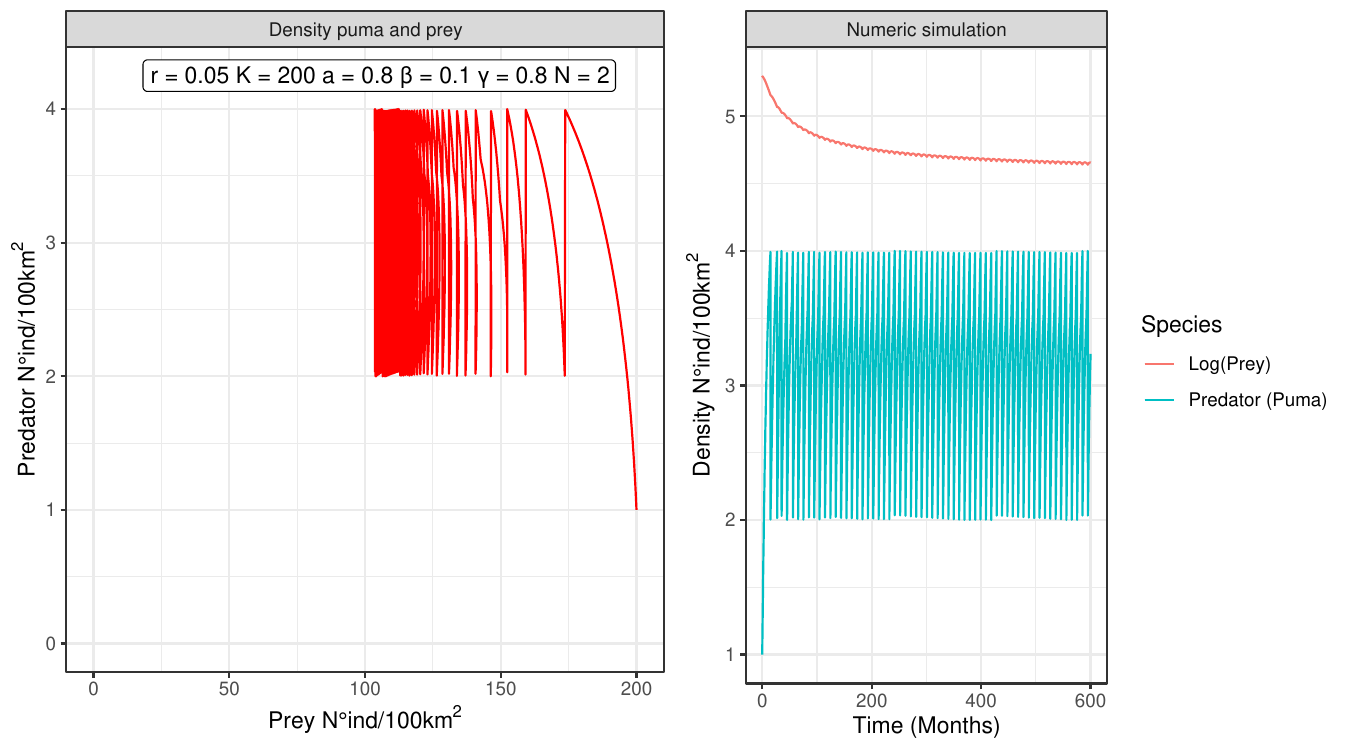}}
\caption{\centering{Oscilatory equilibrium case with predator removal}}
\label{fig10}
\end{figure*}

The decrease in puma density in the oscillatory case leads to a disruption in the system's behavior (Figure \ref{fig10}). This outcome is reminiscent of the effect observed when removing predators in a positive equilibrium scenario, although with the distinction that the oscillations in puma density adapt to the values defined specifically for this case.

In comparison to the perturbation caused by reducing the prey population, the decline in predators can have a significantly more pronounced impact on the system's dynamics. This behavioral shift is analogous to the findings reported in the study by \citet{barman2022role}, which demonstrated that implementing separate harvesting strategies for prey and predators can induce a change in the system's stability. Furthermore, the work of \citet{martin2001predator} extensively explores predator-prey systems with delays, emphasizing that the magnitude of species removal affects the stability of the system's solutions. They identified a critical harvesting threshold beyond which the stability of the solutions undergoes modification. It would be of great interest to determine the lower threshold values at which the behavior of the model proposed in this study is altered, as this information would be crucial in establishing management criteria for puma densities and their potential prey.



\section{Conclusions}

This study explores various aspects of a model that describes the behavior of a predator-prey system. The model, based on delay differential equations, incorporates logistic growth for the prey, a type III functional response, and a maturation period for the population where intraspecific competition, modeled by a Ricker-type function, occurs for the pumas. The model ensures that an increase in predator density decreases the per capita growth rate of the prey, while prey density exerts a positive effect on the per capita growth rate of the predator. Additionally, the puma reproduction rate has a maximum value that decreases with increasing population density or in the absence of sufficient prey. These characteristics align with the attributes identified by \citet{berryman1995credible} as fundamental for the credibility of such models. To our knowledge, this is the first model that integrates the aforementioned aspects for the species Puma concolor.

The analysis demonstrates the existence of a unique solution for the proposed model, which remains non-negative for non-negative initial conditions and is well-defined for all \( t \) greater than zero. The research involves a thorough mathematical analysis of the stability of the system's equilibria, using both Liapunov functions and LaSalle's invariance principle, as well as the distribution of the roots of the characteristic equation of the linearized system. In fact, the equilibrium points of the system are identified, and their stability is examined. The extinction scenario, where both predators and prey disappear, is determined to be a locally unstable equilibrium. In contrast, conditions are established that guarantee absolute stability for the equilibrium point where the prey reach their carrying capacity while the puma population goes extinct. For the positive equilibrium, criteria are established to evaluate its conditional stability, depending on the value of the delay and the model parameters. Furthermore, an R routine was implemented to systematize the theoretical stability analysis, making it numerically applicable. The results obtained are consistent with those of other similar studies \citep{kar2005stability, liu2010analysis, sun2018stability}.

In addition, numerical simulations with different parameter values were performed to investigate the effects of systematically removing a percentage of predators or prey. Although these simulations provide insight into the system's behavior, they only offer an approximation and do not fully capture its complexity. Therefore, it is necessary to continue collecting data to refine the model parameters or introduce new ones. In particular, the numerical analysis considered a fixed delay value associated with the gestation period and the time it takes for pumas to reach sexual maturity.

Simulations with varying parameter values reveal three possible scenarios for the model solutions. The first scenario involves the extinction of the predator, while the second leads to the stabilization of both predator and prey populations. The third scenario shows a constant feedback dynamic between predator and prey densities. Specifically, increasing the mortality rate of the puma leads to rapid growth in the prey population, which can trigger pest outbreaks and pose a significant threat to ecosystem biodiversity.

Changes in the densities of prey and predator populations induce changes in the behavior of the system. Systematic removal of a fraction of the prey, in the case where the model has a locally stable positive equilibrium, causes a significant shift in the system dynamics, now stabilizing at a point with a lower number of predators and potentially leading to their extinction. In the oscillatory case, the reduction of prey shortens the duration of periods with the highest density of the puma. On the other hand, systematic removal of a fraction of the predators, both in the case where the model has a locally stable positive equilibrium and where the system has an oscillatory solution, makes the dynamics of both populations oscillatory without risking the extinction of either.

\section*{Acknowledgments} 
The authors would like to thank Professor G. Robledo for several conversations that contributed to the development of this manuscript.

\bibliography{main}
\bibliographystyle{apalike}

\end{document}